\DeclareMathOperator*{\argmin}{argmin}
\newcommand{\bq}{\begin{equation}}
\newcommand{\eq}{\end{equation}}
\algnewcommand{\LineComment}[1]{\State \(\triangleright\) #1}
\newtheorem{theorem}{Theorem}
\theoremstyle{lemma}
\newtheorem{lemma}[theorem]{Lemma}
\newtheorem{corollary}[theorem]{Corollary}
\newtheorem{definition}[theorem]{Definition}
\newtheorem{hypothesis}[theorem]{Hypothesis}
\theoremstyle{remark}
\newcommand\appendix@section[1]{%
\refstepcounter{section}%
\orig@section*{Appendix \@Alph\c@section: #1}%
%\addcontentsline{toc}{section}{Appendix \@Alph\c@section: #1}%
}
\let\orig@section\section
\g@addto@macro\appendix{\let\section\appendix@section}
\begin{document}

\title[Preferential]{Optimal Transport Using Cost Functions with Preferential Direction with Applications to Optics Inverse Problems}

\author{Axel G. R. Turnquist}
\address{Department of Mathematics, University of Texas at Austin, Austin, TX, 78712}
\email{agrt@utexas.edu}

\begin{abstract}
We focus on Optimal Transport PDE on the unit sphere $\mathbb{S}^2$ with a particular type of cost function $c(x,y) = F(x \cdot y, x \cdot \hat{e}, y \cdot \hat{e})$ which we call cost functions with preferential direction, where $\hat{e} \in \mathbb{S}^2$. This type of cost function arises in an optics application which we call the point-to-point reflector problem. We define basic hypotheses on the cost functions with preferential direction that will allow for the Ma-Trudinger-Wang (MTW) conditions to hold and construct a regularity theory for such cost functions. For the point-to-point reflector problem, we show that the negative cost-sectional curvature condition does not hold. We will nevertheless prove the existence of a unique solution of the point-to-point reflector problem, up to a constant, provided that the source and target intensity are ``close enough''.
\end{abstract}

\date{\today}    
\maketitle

\section{Introduction}\label{sec:introduction}
Freeform optics is a treasure trove of interesting applications of Optimal Transport. An example of such an optics inverse problem that results in the Monge problem of Optimal Transport is the well-known reflector antenna problem, see~\cite{Wang_Reflector} and~\cite{Wang_Reflector2}. In the reflector antenna problem, light originates from the origin and has a given directional intensity. Then, using a reflector surface, the goal is to redirect the light to a desired far-field intensity. Solving for the Optimal Transport PDE with the cost function $c(x,y) = -\log(1-x \cdot y)$ on the unit sphere $\mathbb{S}^2$ can be used to find the shape of the freeform reflector antenna. 

In more generality, the Optimal Transport PDE is a particular formulation of the Monge problem of Optimal Transport, where the goal is to find a measure-preserving mapping which minimizes a cost functional by solving for a so-called ``potential function" (the solution of the PDE) instead of directly for the mapping. The Optimal Transport mapping then can be solved for in terms of the gradient of the potential function. In many applications, such as those in statistics (see~\cite{Peyre_ComputationalOT} for some more ``standard" applications of Optimal Transport), one needs to solve for the optimal mapping. In optics applications, however, the goal is instead to solve for the potential function, since the shape of the reflectors or lenses in the problem are related directly to the potential function.

Now, we briefly introduce the Optimal Transport problem. Given two subsets of Euclidean space $\Omega, \Omega ' \subset \mathbb{R}^d$, and two probability measure $\mu$ and $\nu$ having supports on $\Omega$ and $\Omega'$, respectively, we can ask the question whether or not there exists a mapping $T: \Omega \rightarrow \Omega'$ such that $T_{\#}\mu = \nu$. That is, for every Borel set $B \subset \Omega$, is it possible to find such a $T$ such that $\mu(B) = \nu(T(B))$. Under very general conditions~\cite{villani2}, it is possible to find such a $T$, in fact often many such $T$ exist. However, we can also formulate an optimization problem with the hopes of capturing a specific measure-preserving $T$. That is, we ask whether the further refinement of the problem has a solution. Given a ``cost" function $c: \Omega \times \Omega ' \rightarrow \mathbb{R}$, is there a $T$, such that:
\begin{equation}\label{eq:OT}
T = \argmin_{S_{\#}\mu = \nu} \int_{\Omega} c \left( x, S(x) \right) d\mu(x)?
\end{equation}

This is called the Monge problem of Optimal Transport, going back centuries, see Monge's work~\cite{monge}. The reader might be more familiar with the more general Kantorovich formulation of Optimal Transport, see~\cite{Villani1} or~\cite{santambrogio} for standard introductions.

It may seem like an unrealistic aspiration to find such a $T$. However, the classical results in~\cite{caffarelli} are that given some technical conditions on the measures $\mu$ and $\nu$, on the geometry of the set $\Omega '$ relative to the geometry of the set $\Omega$ and the cost function $c$, found in~\cite{MTW}, not only does there exist such a $T$, but under some circumstances $T$ can actually be found to be unique and $C^{\infty}$! So, not only is the $T$ measure-preserving, but we can find a particularly nice $T$. The regularity of $T$ depends on four things generally: (1) the smoothness of the source and target measures, (2) the geometry of the source and target sets (3) the underlying geometry of the space on which the measures lay, and (4) the cost function. What is interesting is that (1)-(3) could be as nice as you want, but still, the cost function might not allow for one to guarantee the smoothness of the Optimal Transport mapping.

Now, reining in our enthusiasm for a bit, we relax some of the technical conditions (just hoping $T$ to be $C^1$). We also assume $\mu$ and $\nu$ have density functions $f$ and $g$, respectively. Now not only is $T$ measure-preserving, but it was proved, originally in~\cite{brenier} and generalized later, that the minimizer $T$ of the functional $\mathcal{C}(S) = \int_{\Omega} c \left( x, S(x) \right) d\mu(x)$ is related to a potential function $u: \Omega \rightarrow \mathbb{R}$ which is $c$-convex, through the relation:
\begin{equation}\label{eq:relation}
\nabla u(x) = -\nabla_{x} c(x,y)
\end{equation}
for $y = T(x)$. Even though such a $T$ is unique, the potential function $u$ is unique only up to a constant. The definition of $c$-convexity generalizes that of convexity, and, as the nomenclature indicates, it depends on the particular cost function $c$ given in the Optimal Transport problem. In order to introduce the definition of $c$-convexity, we first introduce the $c$-transform.

\begin{definition}
Given a cost function $c:\Omega \times \Omega ' \rightarrow \mathbb{R}$, the $c$-transform of a function $u:\Omega \rightarrow \mathbb{R}$, which is denoted by $u^{c}$ is defined as:
\begin{equation}
    u^{c}(y) = \sup_{x \in \Omega} \left( -c(x,y) - u(x) \right)
\end{equation}
\end{definition}

Now that we have defined the $c$-transform, we can define what we mean by a function $u$ being $c$-convex.

\begin{definition}\label{def:cconvex}
A function $u$ is $c$-convex if at each point $x \in \Omega$, there exist $y \in \Omega '$ and a value $u^{c}(y)$ such that:
\begin{equation}
\begin{cases}
-u^{c}(y) - c(x,y) = u(x), \\
-u^{c}(y) - c(x', y) \leq u(x'), \ \forall x' \in \Omega
\end{cases}
\end{equation}
where $u^{c}(y)$ is the $c$-transform of $u$.
\end{definition}

From the measure-preserving property of $T$ and its relation to the potential function $u$, it was found~\cite{MTW} that the optimal mapping $T$ in Equation~\eqref{eq:OT} could be found via inverting Equation~\eqref{eq:relation} for $T$ and solving for the potential function $u$ in the following Optimal Transport PDE which is elliptic on the set of functions $u$ which are $c$-convex:
\begin{equation}\label{eq:OTPDE}
\det \left(D^2u(x) + D^2_{xx} c(x,y) \right) = \left\vert \det D^2_{xy} c(x,y) \right\vert f(x)/ g(T(x))
\end{equation}
for $y = T(x)$.

For the freeform optics problem we describe in this manuscript, the problem is not posed on subsets of Euclidean space $\mathbb{R}^d$, but instead on the unit sphere $\mathbb{S}^2 \subset \mathbb{R}^3$. We will assume, when our desire is to get regularity guarantees, that the sets $\Omega = \Omega ' = \mathbb{S}^2$, which means that $\mu$ and $\nu$ are positive measures on the whole sphere. Otherwise, the measures will simply have supports on the sphere. Formally, the equivalent Optimal Transport PDE formulation in Equation~\eqref{eq:OTPDE} and Equation~\eqref{eq:relation} on $\mathbb{S}^2$ look the same except that all differential operators are going to be with respect to the induced metric from $\mathbb{R}^3$ onto the unit sphere $\mathbb{S}^2$. 

One potential issue can be seen in that if the gradient of $u$ is too large in Equation~\eqref{eq:relation}, depending on the choice of cost function, the mapping $T$ may move ``too far", that is all the way to the cut locus from of the point $x \in \mathbb{S}^2$ (the antipodal point). This will cause issues with differentiability. In fact, this one of the main issues that was addressed, for the unit sphere, in~\cite{Loeper_OTonSphere}. For the cost functions $c(x,y) = \frac{1}{2}d_{\mathbb{S}^2}(x,y)^2$ and $c(x,y) = -\log(1 - x \cdot y)$ (as well as for a more general class of cost functions satisfying Theorem 4.1 in that paper), as long as the density functions $f$ and $g$ satisfied some technical conditions, then $T$ satisfied $d_{\mathbb{S}^2}\left( x, T(x)\right) \leq \pi - \delta$, for some $\delta>0$. In other words, the Optimal Transport mapping from~\eqref{eq:OT} would only transport mass a certain distance $\delta$ away from the cut locus (the antipodal point). With this guarantee, one could get the \textit{a priori} estimates on the potential function.

For some cost functions, not those treated in Theorem 4.1 of~\cite{Loeper_OTonSphere} mass cannot be transported beyond a certain distance, due to simply the structure of the cost function itself. In~\cite{T_LensRefractor}, for example, it was shown for the cost function $c(x,y) = -\log(n - x \cdot y)$ for $n>1$, beyond a certain distance the Optimal Transport mapping became complex-valued. This requirement was, of course, noted previously in~\cite{gutierrezhuang}. In~\cite{T_LensRefractor}, it was then possible, following the line of work by~\cite{Loeper_OTonSphere}, to check the MTW conditions for such cost functions and construct a regularity theory guaranteeing $C^{\infty}$ regularity of the mapping $T$ provided that the probability distributions $\mu$ and $\nu$ were positive $C^{\infty}$ measures that they did not require mass to move beyond what was allowed by the cost functions.

%The existence of such cost functions is not just a mathematical curiosity, but motivated by the lens refractor problem, where defective cost functions naturally arise from the optics problem. In the lens refractor problem, directional light intensity in a direction $x$ in a medium with refractive index $n_1$ gets reflected through a lens and enters a far-field domain in the direction $T(x)$ where the refractive index is $n_2$. If $n_1>n_2$, we end up with the cost function $c(x,y) = -\log(n - x \cdot y)$. If $n_1<n_2$, then we end up with the cost function $c(x,y) = \log(\kappa x \cdot y - 1)$. The fact that mass is not allowed to move too far, that is $x \cdot y \geq 1/n$ and $x \cdot y \geq 1/\kappa$, respectively, makes sense because a lens is not able to bend light beyond a certain angle.

This manuscript can be thought of as a continuation of the work in~\cite{T_LensRefractor}. However, now we have a cost function that is significantly more complicated than in the previous work. Previously, all the cost functions treated could be written as a function of the dot product between $x$ and $y$, that is $c(x,y) = F(x\cdot y)$. This lead to the distance $d_{\mathbb{S}^2}(x,T(x))$ just depending on the magnitude of $\nabla u(x)$. That is, $T(x)$ was isotropic with respect to the direction of $\nabla u(x)$. In this manuscript, the mapping $T(x)$ arising from the Optimal Transport PDE we will encounter is anisotropic with respect to the direction of $\nabla u(x)$. This introduces many new complications to the formulas and the theory.

In this manuscript, we deal with cost functions that have a ``preferential direction", that is, the cost function $c(x,y) = F(x \cdot y, x \cdot \hat{e}, y \cdot \hat{e})$, where $\hat{e}$ is a particular fixed unit vector that arises from a given optical setup. This preferential direction $\hat{e}$ naturally arises in problems such as a point-to-point reflector problem, where directional light with intensity $f$ at a point $p_1$ reflects off two reflectors and focuses at another point $p_2$, where the directional intensity is $g$. The natural preferential direction is $\hat{e}$ the vector pointing from $p_1$ to $p_2$. No matter what choice of $\hat{e}$, whether one desires to have it point in the positive $z$ direction in $\mathbb{R}^3$, for example, does not obviate its complication on the cost function. %Another example, is a point-to-plane reflector setup, where directional light at a point $p$ with intensity $f$ bounces off two reflectors and projects onto a given plane which is parametrized by its distance from $p$ and its orientation $\hat{e}$. Again, no matter the setup, there is an unavoidable complication in the cost function that arises due to $\hat{e}$. For the case of the point-to-plane problem, we will find that the theory is comparably simple to build. For the point-to-point cost function, a full theory must be developed.

%The goal of this manuscript is to shed light on the structure of these cost functions, namely to (1) show under what conditions the MTW conditions will hold and then (2) whether or not they apply to the point-to-point cost function or the point-to-plane cost function. Specifically, if all the MTW conditions hold except for the negative cost sectional curvature, then it will be possible to prove the existence of unique solutions of the problem. We have the additional complication, for the point-to-point reflector problem, that not every pair of smooth density functions will allow for these problems to be solvable. 

In Section~\ref{sec:background} we introduce the point-to-point reflector problem from optics. We present the Optimal Transport PDE formulation and the resulting cost functions. We then introduce the Ma-Trudinger-Wang (MTW) conditions which are necessary to show for the existence of a unique solution, up to a constant, of a solution for the Optimal Transport PDE, as well as the necessary and sufficient condition to guarantee $C^{\infty}$ regularity of the solution of the Optimal Transport PDE given that the density functions are $C^{\infty}$ smooth and bounded away from zero. In Section~\ref{sec:computations}, we first introduce the hypotheses we will be making on the cost function. We then show that these hypotheses hold for the point-to-point cost function. We then prove that the hypotheses we assume show that the mixed Hessian is bounded away from zero. We finish by deriving the cost-sectional curvature condition for cost functions with preferential direction and showing that it does not hold for the point-to-point cost function. In Section~\ref{sec:regularity}, we prove a regularity theorem for cost functions satisfying Hypothesis~\ref{hyp:mainhypothesis}, the strictly negative cost-sectional curvature condition, and a condition on the source and target distributions satisfying some conditions that prevent mass from moving too ``far". We then show that this proves the existence and uniqueness of Lipschitz continuous solutions of the point-to-point problem with assuming mass does not move ``too far". In Section~\ref{sec:conclusion}, we summarize the results for cost functions with preferential directions and also specifically for the point-to-point cost function.

\section{Optics Problem and the MTW Conditions}\label{sec:background}
In this section, we introduce in more detail an optics problems that results in Optimal Transport PDE~\eqref{eq:OTPDE} with a cost function with preferential direction. The cost function we will be discussing arises from an optics inverse problem that involves designing the shape of reflectors used for reshaping light intensity patterns. We also introduce the Ma-Trudinger-Wang (MTW) conditions, originally introduced in~\cite{MTW}, but here we will use their statement from~\cite{Loeper_OTonSphere} as it pertains to the spherical geometry.

\subsection{Point-to-Point Reflector Problem}

It is perhaps well-known that a parabolic mirror can redirect parallel light to a single point, called the focus (of the parabolic mirror). In general, light emanating from a point $\mathcal{S}$ can be redirected via a reflector. It is possible, it turns out, to build two reflectors to take light emanating from a single point $\mathcal{S}$ and redirect it to pass through another single point $\mathcal{T}$. What is perhaps not obvious is that the light from $\mathcal{S}$ can be \textit{refocused} to $\mathcal{T}$ without attenuation using a surprising variety of reflectors. More precisely, for geometric optics (classical r\'{e}gime), we can redirect light from $\mathcal{S}$ to $\mathcal{T}$ and the light intensity is conserved. What is even less obvious is that it is possible to formulate a PDE whose solution (if it exists) can be used to solve for the shape of the reflectors used in this setup. And what is least obvious of all is that sometimes the PDE can be solved.

We desire to use two reflector surfaces to redirect light at a point $\mathcal{S}$ with directional intensity $f(x)$ to light at a point $\mathcal{T}$ with directional light intensity $g(y)$. In this notation, $x,y \in \mathbb{S}^2$ since they encode direction. We denote the two reflectors by the notation $\mathcal{R}_1$ and $\mathcal{R}_2$, where we choose $\mathcal{R}_1 = x\tilde{u}(x)$ and $\mathcal{R}_2 = y\tilde{v}(y)$. The direction from the source $\mathcal{S}$ the target $\mathcal{T}$ is denoted by the unit vector $\hat{e}$, and the distance separating the two points is denoted by $l$. The light at $\mathcal{S}$ with radial intensity $f(x)$ travels in the direction $x$, reflects off $\mathcal{R}_1$, then reflects off $\mathcal{R}_2$, finally traveling in the direction $y$ through $\mathcal{T}$ with radial intensity $g(y)$, see Figure~\ref{fig:p2p}. Interestingly, the length of the optical path from $\mathcal{S}$, reflecting off $\mathcal{R}_1$, then reflecting off $\mathcal{R}_2$ and reaching the point $\mathcal{T}$ is a fixed constant, which we denote by $L$.

\begin{figure}[htp]
	\centering
	\includegraphics[width=0.8\textwidth]{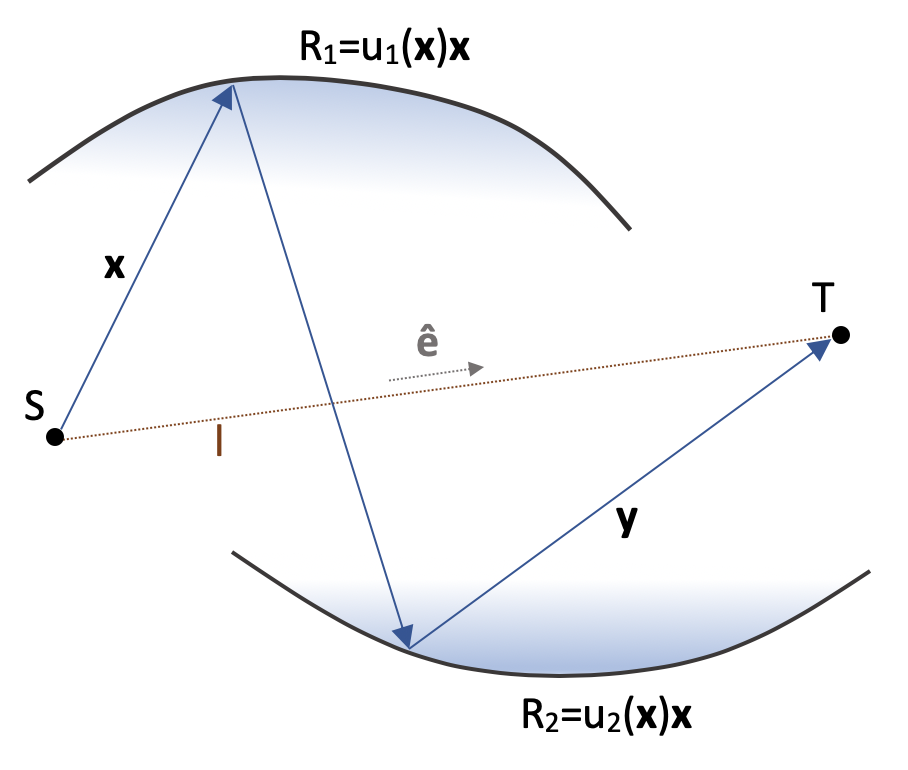}
	\caption{Light from the source $\mathcal{S}$ with intensity $f(x)$ in the direction $x$ reflects of the reflector $\mathcal{R}_1$ and then the reflector $\mathcal{R}_2$, traveling in the direction $y$, where the resulting directional intensity pattern is $g(y)$ at the target point $\mathcal{T}$.}
	\label{fig:p2p}
\end{figure}

\subsubsection{PDE Formulation}

Following the derivation in~\cite{YadavThesis}, we assume conservation of light intensity, that is, for any subset $A \subset \mathbb{S}^2$ where $T(A) \subset \mathbb{S}^2$, we have
\begin{equation}
\int_{A} f(x) dx = \int_{T(A)} g(y) dy.
\end{equation}

Then, given this conservation of light intensity, the shapes of the reflectors $\mathcal{R}_1 = \tilde{u}(x)$ and $\mathcal{R}_2 = \tilde{v}(y)$, the laws of reflection, and the change of variables from $\tilde{u}$ to $u$ and $\tilde{v}$ to $v$ via the equations:
\begin{equation}
\tilde{u}(x) = \frac{L^2-l^2}{\left( L-l(x \cdot \hat{e}) \right) \left( (L^2-l^2) e^{u(x)} + 1\right)}
\end{equation}

and
\begin{equation}
\tilde{v}(x) = \frac{L^2-l^2}{\left( L-l(y \cdot \hat{e}) \right) \left( (L^2-l^2) e^{v(x)} + 1\right)}
\end{equation}
we arrive at the PDE:
\begin{equation}
\det \left( D^2 u(x) + D^2_{xx} c(x,y) \vert_{y = T(x)} \right) = \frac{\left\vert D^2_{xy} c(x,y) \vert_{y = T(x)} \right\vert f(x)}{g(T(x))}
\end{equation}
where $T(x)$ satisfies Equation~\eqref{eq:relation} and $u(x) + v(T(x)) = c(x,T(x))$. According to the derivation in Section 3.4 of~\cite{YadavThesis}, the cost function for the point-to-point system is:
\begin{equation}
c(x,y) = \log \left( \frac{1}{(L^2-l^2)^2} - \frac{1-x \cdot y}{2(L^2-l^2)(L-lx \cdot \hat{e})(L-l y \cdot \hat{e})} \right).
\end{equation}

\subsection{Ma-Trudinger-Wang Conditions for Defective Cost Functions}\label{sec:MTW}

The MTW conditions were originally formulated in~\cite{MTW}, but we focus on the Riemannian generalization as stated in~\cite{Loeper_OTonSphere}. Given a compact domain $D \subset \mathbb{S}^2 \times \mathbb{S}^2$, denote by $\pi_1: \mathbb{S}^2 \times \mathbb{S}^2 \mapsto \mathbb{S}^2$, the projection $\pi_1(x, y) = x$ and its inverse $\pi_{1}^{-1}(x) = \left\{ x \right\} \times \mathbb{S}^2$. For any $x\in \pi_{1}(D)$, we denote by $D_{x}$ the set $D \cap \pi_{1}^{-1}(x)$. Then, we introduce the following conditions:

\begin{hypothesis}
\begin{itemize}
\item [$\mathbf{A0}$] The cost function $c$ belongs to $C^{4}(D)$.
\item [$\mathbf{A1}$] For all $x \in \pi_{1}(D)$, the map $y \rightarrow -\nabla_{x}c(x,y)$ is injective on $D_{x}$.
\item [$\mathbf{A2}$] The cost function $c$ satisfies $\det D^{2}_{xy}c \neq 0$ for all $(x,y)$ in $D$.
\item [$\mathbf{Aw}$] The cost-sectional curvature is non-negative on $D$. That is, for all $(x,y) \in D$, for all $\xi, \eta \in T_{x}\mathbb{S}^2$, $\xi \perp \eta$, 
\begin{equation}
\mathfrak{G}_{c}(x, y)(\xi, \eta) \geq 0
\end{equation}
\item [$\mathbf{As}$] The cost-sectional curvature is uniformly positive on $D$. That is, for all $(x,y) \in D$, for all $\xi, \eta \in T_{x}\mathbb{S}^2$, $\xi \perp \eta$, 
\begin{equation}
\mathfrak{G}_{c}(x, y)(\xi, \eta) \geq C_0 \left\vert \xi \right\vert^2 \left\vert \eta \right\vert^2
\end{equation}
\end{itemize}
\end{hypothesis}

We will often colloquially state that the MTW conditions ``hold" for a particular cost function. When it is necessary to be more precise, we will state specifically for which domain $D \subset \mathbb{S}^2 \times \mathbb{S}^2$ these specific conditions hold.

Our aim in this paper is to verify the MTW conditions $\mathbf{A1}$ and $\mathbf{A2}$ for the point-to-point and point-to-plane cost functions on a certain domain $D$. We will additionally establish a general regularity theory regarding cost functions with preferential direction that satisfy $\mathbf{A0}$, $\mathbf{A1}$, $\mathbf{A2}$ and $\mathbf{As}$. Establishing that $\mathbf{A0}$, $\mathbf{A1}$ and $\mathbf{A2}$ hold on some domain $D$, which will be shown to be achievable with a wide class of density functions, guarantees the existence (up to a constant) of a solution of the Optimal Transport PDE and also guarantees the smoothness of solutions given the smoothness of the source and target measures and also that their supports are on sets $\Omega$ and $\Omega '$ which satisfy a geometric condition, see~\cite{MTW}. For the sphere, if we desire smoothness, we assume the measures are positive over the entire sphere and thus bypass the possible difficulties with the geometries of $\Omega$ and $\Omega '$ that are tricky in Euclidean space. It turns out that if $\mathbf{A0}$, $\mathbf{A1}$, $\mathbf{A2}$ hold, but $\mathbf{Aw}$ does not, then there exist $C^{\infty}$ density functions $f$ and $g$, bounded away from zero, such that the Optimal Transport mapping $T$ from Equation~\eqref{eq:OT} is not even guaranteed to be continuous, see~\cite{LoeperReg}.

We will show in Section~\ref{sec:computations} that $\mathbf{A0}$ and $\mathbf{A1}$ hold by important hypotheses that we will state. Given these hypotheses, we will show that $\mathbf{A2}$ holds as well. We will show, however, that $\mathbf{Aw}$ does not hold on any domain $D$ for the point-to-point cost function.

\section{Computations}\label{sec:computations}
\subsection{Cost Functions with Preferential Directions}

In this subsection, we introduce the concept of a cost function with preferential direction and demonstrate that this definition can be applied to the point-to-point optics problem. We simplify the formula of the cost function, which will be helpful in later formulas. %We also demonstrate that a cost function arising from a point-to-plane problem in optics appears to be a cost function with preferential direction, but fails to satisfy the condition that the cost function is symmetric in its argument.

\begin{definition}
A cost function $c(x,y): \mathbb{S}^2 \times \mathbb{S}^2 \rightarrow \mathbb{R}$ that is symmetric in its arguments will be called a cost function with preferential direction if it can be written as $c(x,y) = F(x \cdot y, x \cdot \hat{e}, y \cdot \hat{e})$ for some fixed unit vector $\hat{e} \in \mathbb{S}^2$.
\end{definition}

First, we simplify the formula for the cost function for the point-to-point reflector problem, which clearly is a cost function with preferential direction:
\begin{equation}
c(x,y) = \log \left( \frac{1}{(L^2-l^2)^2} - \frac{1-x \cdot y}{2(L^2-l^2)(L-lx \cdot \hat{e})(L-l y \cdot \hat{e})} \right) = F(x \cdot y, x \cdot \hat{e}, y \cdot \hat{e}).
\end{equation}

We can rewrite the cost function as follows:
\begin{align}
c(x,y) &= \log \left( \frac{1}{(L^2-l^2)^2} \left(1 - \frac{(L^2-l^2)(1-x \cdot y)}{2(L-lx \cdot \hat{e})(L-l y \cdot \hat{e})} \right) \right), \\
&= \log \left( \frac{1}{(L^2-l^2)^2} \right) + \log \left(1 - \frac{(L^2-l^2)(1-x \cdot y)}{2(L-lx \cdot \hat{e})(L-l y \cdot \hat{e})} \right),
\end{align}
and since the Optimal Transport PDE only depends on derivatives of the cost function, we may thus translate the cost function by a constant and use the following cost function for the point-to-point problem:
\begin{equation}
c(x,y) = \log \left( 1 - \frac{(L^2-l^2)(1-x \cdot y)}{2(L-lx \cdot \hat{e})(L-l y \cdot \hat{e})} \right) = F(x \cdot y, x \cdot \hat{e}, y \cdot \hat{e}).
\end{equation}

We denote $a = l/L$. Notice that $0<a<1$. With this change of notation, the cost function becomes:
\begin{equation}\label{eq:cost1}
c(x,y) = \log \left( 1 - \frac{(1-a^2)(1-x \cdot y)}{2(1-ax \cdot \hat{e})(1-a y \cdot \hat{e})} \right).
\end{equation}
We will refer to the cost function in Equation~\eqref{eq:cost1} as the point-to-point cost function.

\subsection{Solving for the Mapping}\label{sec:dotproduct}

In this subsection, we will be providing some basic discussion about the hypotheses we wish to impose on the cost function with preferential direction, then we will list the hypotheses explicitly.

We rewrite Equation~\eqref{eq:relation} as
\begin{equation}
p = -\nabla_{x} c(x,y),
\end{equation}
where $p$ denotes an element of the tangent plane at $x$. For cost functions of the type
\begin{equation}
c(x,y) = F(x \cdot y, x \cdot \hat{e}, y \cdot \hat{e}),
\end{equation}
we would like for there to be a one-to-one correspondence between $p$ and $y$. Given $x$ and $p$, define $q = x \times p$. Then, Equation~\eqref{eq:relation}, supplemented with constraints on $y$, can be written as the four following equations:
\begin{align}\label{eq:system1}
0 &= \left\Vert p \right\Vert + F_1(x \cdot y, x \cdot \hat{e}, y \cdot \hat{e}) y \cdot \hat{p} + F_2(x \cdot y, x \cdot \hat{e}, y \cdot \hat{e}) \hat{e} \cdot \hat{p} \\
0 &= F_1(x \cdot y, x \cdot \hat{e}, y \cdot \hat{e}) y \cdot \hat{q} + F_2(x \cdot y, x \cdot \hat{e}, y \cdot \hat{e}) \hat{e} \cdot \hat{q} \\
0 &= (x \cdot y)^2 +(y \cdot \hat{p})^2 + (y \cdot \hat{q})^2-1 \\
0 &= (x \cdot y)x \cdot \hat{e} + (y \cdot \hat{p}) \hat{e} \cdot \hat{p} + (y \cdot \hat{q}) \hat{e} \cdot \hat{q} - y \cdot \hat{e}.
\end{align}

We will need to be able to solve these equations for the variables $\xi_1 = (x \cdot y, y \cdot \hat{p}, y \cdot \hat{q}, y \cdot \hat{e})$ in terms of the variables $\xi_2 = (\left\Vert p \right\Vert, x \cdot \hat{e}, \hat{e} \cdot \hat{p}, \hat{e} \cdot \hat{q})$. Denote Equation~\eqref{eq:system1} by $0 = H(\xi_1, \xi_2)$, that is,
\begin{multline}\label{eq:Hdefinition}
H(\xi_1, \xi_2) = H(x \cdot y, y \cdot \hat{p}, y \cdot \hat{q}, y \cdot \hat{e}, \left\Vert p \right\Vert, x \cdot \hat{e}, \hat{e} \cdot \hat{p}, \hat{e} \cdot \hat{q}) = \\
\begin{bmatrix}
\left\Vert p \right\Vert + F_1(x \cdot y, x \cdot \hat{e}, y \cdot \hat{e}) y \cdot \hat{p} + F_2(x \cdot y, x \cdot \hat{e}, y \cdot \hat{e}) \hat{e} \cdot \hat{p} \\
F_1(x \cdot y, x \cdot \hat{e}, y \cdot \hat{e}) y \cdot \hat{q} + F_2(x \cdot y, x \cdot \hat{e}, y \cdot \hat{e}) \hat{e} \cdot \hat{q} \\
(x \cdot y)^2 +(y \cdot \hat{p})^2 + (y \cdot \hat{q})^2-1 \\
(x \cdot y)x \cdot \hat{e} + (y \cdot \hat{p}) \hat{e} \cdot \hat{p} + (y \cdot \hat{q}) \hat{e} \cdot \hat{q} - y \cdot \hat{e}
\end{bmatrix}.
\end{multline}

Suppose that $\xi_2$ is fixed on some domain $\Omega_2$. We need to assume that Equation~\eqref{eq:system1} can be solved for $\xi_1$. This will allow us to, for a fixed $x$ to solve for $y$ in terms of $p$. %We thus define the mapping:

In order to characterize the smooth dependence of $x \cdot y$, $y \cdot \hat{p}$, $y \cdot \hat{q}$ and $y \cdot \hat{e}$ on the variables $\left\Vert p \right\Vert$, $x \cdot \hat{e}$, $\hat{e} \cdot \hat{p}$ and $\hat{e} \cdot \hat{q}$, the most convenient tool is the implicit function theorem. On the other hand, directly solving for $\xi_1$ and showing differentiability is another alternative. This is what we will be doing in Section~\ref{sec:verify}. However, generally, the implicit function theorem gives us a sufficient condition for what we need. We require the following:
\begin{equation}
\left\vert D_{\xi_{1}}H \right\vert = 
\begin{vmatrix}
F_{11} y \cdot \hat{p} + F_{12} \hat{e} \cdot \hat{p} & F_1 & 0 & F_{13} y \cdot \hat{p} + F_{23} \hat{e} \cdot \hat{p} \\
F_{11} y \cdot \hat{q} + F_{12} \hat{e} \cdot \hat{q} & 0 & F_1 & F_{13} y \cdot \hat{q} + F_{23} \hat{e} \cdot \hat{q} \\
2 x \cdot y & 2 y \cdot \hat{p} & 2 y \cdot \hat{q} & 0 \\
x \cdot \hat{e} & \hat{e} \cdot \hat{p} & \hat{e} \cdot \hat{q} & -1
\end{vmatrix} \neq 0.
\end{equation}

This will then imply that we can solve uniquely for the variables $\xi_1$ in terms of the variables $\xi_2$. That is, there exists a function $K$ such that:
\begin{equation}\label{eq:HK}
0 = H(K(\xi_2), \xi_2).
\end{equation}

Furthermore, it also implies that each component of $K$ is differentiable in terms of the variable $\xi_1$:
\begin{equation}
\left[ \frac{\partial K}{\partial \xi_2} \right] = -\left[ D_{\xi_{1}} H(K(\xi_2), \xi_2) \right]^{-1} \left[ D_{\xi_{2}} H(K(\xi_2), \xi_2) \right],
\end{equation}
where,
\begin{equation}
D_{\xi_{2}} H(\xi_1, \xi_2) = \begin{pmatrix}
1 & F_{12} y \cdot \hat{p} + F_{22} \hat{e} \cdot \hat{p} & F_2 & 0 \\
0 & F_{12} y \cdot \hat{q} + F_{22} \hat{e} \cdot \hat{q} & 0 & F_2 \\
0 & 0 & 0 & 0 \\
0 & x \cdot y & y \cdot \hat{p} & y \cdot \hat{q} 
\end{pmatrix}.
\end{equation}

In order to bound each derivative $\frac{\partial K_i}{\partial (\xi_2)_{j}}$, we will need bounds on the functions $F_1, F_2, F_{11}, F_{12}, F_{13}, F_{22}$, and $F_{23}$.

Now, suppose given $x$ and $\hat{e}$, we are given a resulting $y$. We need it to be possible to know the value of $p$ that yielded $y$. This is assumption $\mathbf{A1}$ in the MTW conditions. Hence, we need to assume the system in Equation~\eqref{eq:system1} can also be solved with respect to the variables $\left\Vert p \right\Vert, \hat{e} \cdot \hat{p}$, and $\hat{e} \cdot \hat{q}$. We will additionally want to assume that:
\begin{equation}
\left\vert \det D_{\xi_2; x \cdot \hat{e}} H \right\vert = \begin{vmatrix}
1 & F_2 & 0 \\
0 & 0 & F_2 \\
0 & y \cdot \hat{p} & y \cdot \hat{q}
\end{vmatrix} = -F_2 y \cdot \hat{p} \neq 0.
\end{equation}

These assumptions mean that, on a relevant domain, there is a one-to-one, differentiable correspondence between $y$ and $p$.

We also find the conditions on $c(x,y)$ needed to guarantee that $x \cdot y \rightarrow 1$ as $\left\Vert p \right\Vert \rightarrow 0$.

\begin{lemma}\label{lemma:startatzero}
Assuming $F_2(1, x \cdot \hat{e}, y \cdot \hat{e})=0$, but $F_2(x \cdot y, x \cdot \hat{e}, y \cdot \hat{e}) \neq 0$ for all $x \cdot y \neq 1$ and $F_1 \neq 0$, then $x \cdot y = 1$ as $\left\Vert p \right\Vert \rightarrow 0$.
\end{lemma}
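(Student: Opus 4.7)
The plan is to combine a direct computation with the local bijection established in the preceding discussion. First, I would square and add the first two equations of system~\eqref{eq:system1}, then simplify using the remaining two equations together with the orthonormality of $\{x, \hat{p}, \hat{q}\}$, which should yield
\[
\left\Vert p \right\Vert^{2} = \left\Vert F_{1}(y - (x \cdot y)x) + F_{2}(\hat{e} - (x \cdot \hat{e})x) \right\Vert^{2}.
\]
This uses that $(y \cdot \hat{p})^{2} + (y \cdot \hat{q})^{2} = 1 - (x \cdot y)^{2}$ from the sphere constraint, $(\hat{e} \cdot \hat{p})^{2} + (\hat{e} \cdot \hat{q})^{2} = 1 - (x \cdot \hat{e})^{2}$ because $\{x, \hat{p}, \hat{q}\}$ is an orthonormal basis of $\mathbb{R}^{3}$, and that the cross term equals $2 F_{1} F_{2}(y \cdot \hat{e} - (x \cdot y)(x \cdot \hat{e}))$ by the fourth equation.

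Next, I would take the limit $\left\Vert p \right\Vert \to 0$ and verify by substitution that the trivial configuration $y = x$ (equivalently $x \cdot y = 1$) satisfies the limiting vector equation $F_{1}(y - (x \cdot y)x) + F_{2}(\hat{e} - (x \cdot \hat{e})x) = 0$: the first term vanishes because $y - (x \cdot y)x = 0$ when $y = x$, and the second term vanishes by the hypothesis $F_{2}(1, x \cdot \hat{e}, x \cdot \hat{e}) = 0$. Hence $y = x$ is a consistent solution at $\left\Vert p \right\Vert = 0$.

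To promote this into the claimed limit statement, I would invoke the bijection between $p$ and $y$ guaranteed by the invertibility of $D_{\xi_{1}} H$ together with $F_{1} \neq 0$, as set up in the discussion preceding the lemma: by continuity of $y$ as a function of $p$ on the branch through $(y, p) = (x, 0)$, one concludes $y \to x$ and hence $x \cdot y \to 1$ as $\left\Vert p \right\Vert \to 0$. The main obstacle I anticipate is the uniqueness of this limit branch, since imposing $F_{1}(y - (x \cdot y)x) + F_{2}(\hat{e} - (x \cdot \hat{e})x) = 0$ with $x \cdot y \neq 1$ (so $F_{2} \neq 0$ by hypothesis) yields the constraint $F_{1}^{2}(1 - (x \cdot y)^{2}) = F_{2}^{2}(1 - (x \cdot \hat{e})^{2})$, which a priori admits spurious solutions. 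These are excluded only by the local uniqueness of the solution branch through the trivial configuration, which is why the invertibility of $D_{\xi_{1}} H$ (and $F_{1} \neq 0$) feature essentially in the hypothesis.
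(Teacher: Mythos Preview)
Your proposal is correct and arrives at the same conclusion, but by a genuinely different route than the paper. The paper argues directly from the limiting $2\times 2$ linear system
\[
\begin{pmatrix} y\cdot\hat p & \hat e\cdot\hat p \\ y\cdot\hat q & \hat e\cdot\hat q \end{pmatrix}
\begin{pmatrix} F_1 \\ F_2 \end{pmatrix}=0
\]
and does a case analysis on whether the coefficient matrix is singular, using $F_1\neq 0$ and the vanishing pattern of $F_2$ to force $y=x$ (and to exclude $y=-x$). No appeal to the implicit function theorem or to the invertibility of $D_{\xi_1}H$ is made; uniqueness is obtained by exclusion at the limit point itself. Your approach instead repackages the first two equations of \eqref{eq:system1} into the coordinate-free identity $\|p\|^2=\|F_1(y-(x\cdot y)x)+F_2(\hat e-(x\cdot\hat e)x)\|^2$, verifies that $y=x$ satisfies the limiting equation, and then invokes branch uniqueness from the surrounding implicit-function framework to select that limit.

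Each approach has a trade-off. Your vector identity is cleaner and sidesteps the awkwardness that $\hat p,\hat q$ are undefined at $p=0$, but it imports an assumption (invertibility of $D_{\xi_1}H$) that is not among the lemma's stated hypotheses; you are honest about this in your last paragraph. The paper's argument is meant to be self-contained from the lemma's hypotheses alone, though its claim that $x\neq\pm\hat e$ and $y\neq\pm x$ already make the $2\times2$ matrix invertible overlooks the case where the tangential projections of $y$ and $\hat e$ are nonzero but parallel --- exactly the spurious-branch scenario $F_1^2(1-(x\cdot y)^2)=F_2^2(1-(x\cdot\hat e)^2)$ you flagged. In that sense your explicit acknowledgment of the obstruction, and your resolution via the local bijection, is arguably the more careful treatment.
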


\begin{proof}

Equation~\eqref{eq:system1} gives us:
\begin{align}
0 = \left\Vert p \right\Vert + F_1 y \cdot \hat{p} + F_2 \hat{e} \cdot \hat{p} \\
0 = F_1 y \cdot \hat{q} + F_2 \hat{e} \cdot \hat{q}.
\end{align}

Then, as we take the limit $\left\Vert p \right\Vert \rightarrow 0$, we get the system:
\begin{equation}
\begin{pmatrix}
y \cdot \hat{p} & \hat{e} \cdot \hat{p} \\
y \cdot \hat{q} & \hat{e} \cdot \hat{q}
\end{pmatrix}
\begin{pmatrix}
F_1 \\
F_2
\end{pmatrix} = \begin{pmatrix}
0 \\
0
\end{pmatrix}
\end{equation}

Let us first look at the case where $x \neq \pm \hat{e}$ and $y \neq \pm x$. Since $x \neq \pm \hat{e}$ and $y \neq \pm x$, the matrix is invertible, so the solution should be $F_1 = F_2 = 0$, which is not possible by the assumptions of the theorem. If we allow $x = \pm \hat{e}$, but still $y \neq \pm x$, we get:
\begin{equation}
\begin{pmatrix}
y \cdot \hat{p} & 0 \\
y \cdot \hat{q} & 0
\end{pmatrix}
\begin{pmatrix}
F_1 \\
F_2
\end{pmatrix} = 
\begin{pmatrix}
y \cdot \hat{p} F_1 \\
y \cdot \hat{q} F_1
\end{pmatrix}=
\begin{pmatrix}
0 \\
0
\end{pmatrix},
\end{equation}
which is also not possible, since $y \neq \pm x$ and $F_1 \neq 0$. If $y = \pm x$, but $x \neq \pm \hat{e}$, then this would require that $F_2 = 0$. Since $F_2 = 0$ only when $y = x$, then we cannot have $y = -x$ when $\left\Vert p \right\Vert \rightarrow 0$. By the hypotheses on the cost function, $y$ is continuous in $x \cdot \hat{e}$. Therefore, by continuity, we have $y = x$ as $\left\Vert p \right\Vert \rightarrow 0$ when $x = \pm \hat{e}$ as well.

\end{proof}

Note that a similar argument can be made for $x \cdot y \rightarrow -1$, with the conditions on $F_2$ changed \textit{mutatis mutandis}.

For clarity of exposition, we now state the assumptions that we have informally discussed on the cost function $c(x,y) = F(x \cdot y, x \cdot \hat{e}, y \cdot \hat{e})$, as well as some new assumptions and their implications. These assumptions can be compared with and their analogous (albeit simpler) definitions for defective cost functions in~\cite{T_LensRefractor}.

\begin{hypothesis}\label{hyp:mainhypothesis}
Define $H$ by Equation~\eqref{eq:Hdefinition}. We make the following assumptions on the cost function $c(x,y): \mathbb{S}^2 \times \mathbb{S}^2 \rightarrow \mathbb{R}$, where $c(x,y) = F(x \cdot y, x \cdot \hat{e}, y \cdot \hat{e})$ for $\hat{e} \in \mathbb{S}^2$ fixed:
\begin{enumerate}
\item Let $H(\xi_1, \xi_2)$ be solvable for $\xi_1$ for all $\xi_2 \in \Omega_2$, where $\Omega_2 \subset \mathbb{R} \times [-1,1]^3$ and where $[0, p^{*}] \times [-1,1]^3 \subset \Omega_2$ for some $p^{*}>0$. This will allow us to solve for $y$ once we have $p$.
\item Let $H(\xi_1, \xi_2)$ be solvable for $\xi_2$ for all $\xi_1 \in \Omega_1$, where $\Omega_1 \subset [-1,1]^4$ and where $[\xi^{*}, 1] \times [-1,1]^3 \subset \Omega_1$ for some $\xi^{*}<1$. This allows us to verify assumption \textbf{A1} of the MTW conditions. With this additional assumption, we have a one-to-one correspondence between $y$ and $p$. Note, that we could instead make the assumption $[-1, \xi^{*}] \times [-1,1]^3 \subset \Omega_1$ for some $\xi^{*}>-1$.
\item Let $\left\vert D_{\xi_{1}} H \right\vert \neq 0$ on $\Omega_1 \times \Omega_2$ or, via directly solving, show that $\xi_1$ has smooth dependence on $\xi_2$ on $\Omega_1 \times \Omega_2$. This will allow us to smoothly solve for $y$ in terms of $x$, $\hat{e}$ and $p$ and help show that the mixed Hessian term is bounded away from zero, which is assumption \textbf{A2} of the MTW conditions.
\item Let $\left\vert D_{\xi_2; x \cdot \hat{e}} H \right\vert \neq 0$ on $\Omega_1 \times \Omega_2$. This will allow us to smoothly solve for $p$ in terms of $x$, $\hat{e}$ and $y$ and help show that the mixed Hessian term is bounded away from zero, which is assumption \textbf{A2} of the MTW conditions.
\item Let $F_1, F_2, F_{11}, F_{12}, F_{13}, F_{22}, F_{23}$ be bounded on $\Omega \subset [-1,1]^3$ and where $[\xi^{*}, 1] \times [-1,1]^2 \subset \Omega$ for some $\xi^{*}<1$. This will help show that the mixed Hessian term is bounded away from zero, which is assumption \textbf{A2} of the MTW conditions. Note, that we could instead make the assumption $[-1, \xi^{*}] \times [-1,1]^2 \subset \Omega$ for some $\xi^{*}>-1$.
\item Let $F_1 \neq 0$ on $\Omega$. This will help show that the mixed Hessian term is bounded away from zero, which is assumption \textbf{A2} of the MTW conditions. This is also important in showing that $y \rightarrow x$ as $\left\Vert p \right\Vert \rightarrow 0$.
\item Let $F_2/\left\Vert p \right\Vert$ be bounded as $\left\Vert p \right\Vert \rightarrow 0$ on $\Omega$. This will help show that the mixed Hessian term is bounded away from zero, which is assumption \textbf{A2} of the MTW conditions.
\item Let $F_2(1, x \cdot \hat{e}, y \cdot \hat{e}) = 0$ and $F_2(x \cdot y, x \cdot \hat{e}, y \cdot \hat{e}) \neq 0 \ \forall x \cdot y \neq 1$ onn $\Omega$. This assumption shows that $\lim_{\left\Vert p \right\Vert \rightarrow 0} y = x$ for the point-to-point cost function. The same assumption is not necessary to make for the point-to-plane cost function, where $\lim_{\left\Vert p \right\Vert \rightarrow 0} y = -x$, due to the simplicity of the structure of the cost function. However, for general cost functions where $\lim_{\left\Vert p \right\Vert \rightarrow 0} y = -x$, a similar assumption would need to be made.
\item Let $c(x,y) = F(x \cdot y, x \cdot \hat{e}, y \cdot \hat{e})$ be $C^{4}$ on $\Omega$. This is the MTW condition $\mathbf{A0}$. This will be unnecessary to verify for the point-to-point cost function, since this condition is only needed to check the cost-sectional curvature condition $\mathbf{Aw}$ and $\mathbf{As}$, which the point-to-point cost function fails, and the \textit{a priori} estimates, which we cannot use since solutions to the Optimal Transport PDE with the point-to-point cost function cannot even be guaranteed to be continuous.
\end{enumerate}
\end{hypothesis}

\subsection{Verifying Properties of the Point-to-Point Cost Function}\label{sec:verify}

\subsubsection{Identifying the Sets $\Omega_1$, $\Omega_2$, and $\Omega$ for the Point-to-Point Cost Function}

We begin by identifying the sets $\Omega_1, \Omega_2$ and $\Omega$ in Hypothesis~\ref{hyp:mainhypothesis} for the point-to-point cost functions.

%\subsubsection{Point-to-Point Cost Function}\label{sec:ptpointverify}
For the point-to-point cost function, the easiest way to identify $\Omega_2$ is to simply solve for the mapping directly. Using Equation~\eqref{eq:system1}, we get:
\begin{equation}
-\left\Vert p \right\Vert \left( \alpha(1-a x \cdot \hat{e})(1 - a y \cdot \hat{e})-1 + x \cdot y \right) = y \cdot \hat{p} - \frac{a \hat{e} \cdot \hat{p} (1 - x \cdot y)}{1 - a x \cdot \hat{e}},
\end{equation}
where $\alpha = 2/(1-a^2)$. We then use $y \cdot \hat{e} = (x \cdot y)x \cdot \hat{e} + (y \cdot \hat{p}) \hat{e} \cdot \hat{p} + (y \cdot \hat{q}) \hat{e} \cdot \hat{q}$ and get:
\begin{multline}
-\left\Vert p \right\Vert \left( \alpha (1 - a x \cdot \hat{e})-1 + x \cdot y - a \alpha(1-a x \cdot \hat{e})((x \cdot y) x \cdot \hat{e} + (y \cdot \hat{p}) \hat{e} \cdot \hat{p} + (y \cdot \hat{q}) \hat{e} \cdot \hat{q}) \right) = \\
y \cdot \hat{p} - \frac{a \hat{e} \cdot \hat{p} (1 - x \cdot y)}{1 - a x \cdot \hat{e}}.
\end{multline}

Now, we use $y \cdot \hat{q} = \frac{a(\hat{e} \cdot \hat{q})(1-x \cdot y)}{1 - a x \cdot \hat{e}} =  \frac{a \hat{e} \cdot \hat{q}}{\beta}(1-x \cdot y) = \alpha_2 + \beta_2(x \cdot y)$, where $\beta = 1-a x \cdot \hat{e}$. Rewriting this, we get:
\begin{multline}
\left\Vert p \right\Vert \left(1 - \alpha \beta + a^2 \alpha (\hat{e} \cdot \hat{q})^2 \right) + \frac{a \hat{e} \cdot \hat{p})}{\beta} + \\
(x \cdot y) \left( \left\Vert p \right\Vert \left( a \alpha \beta x \cdot \hat{e} - 1 - a^2 \alpha (\hat{e} \cdot \hat{q})^2 \right) - \frac{a (\hat{e} \cdot \hat{p})}{\beta} \right) = (y \cdot \hat{p}) \left( 1 - \left\Vert p \right\Vert a \alpha \beta (\hat{e} \cdot \hat{p}) \right).
\end{multline}

Relabeling $\gamma_1 = 1 + a^2\alpha (\hat{e} \cdot \hat{q})^2$ and $\gamma_2 = \frac{a (\hat{e} \cdot \hat{p})}{\beta}$. Then, we get:
\begin{equation}\label{eq:ypyx}
y \cdot \hat{p} = \frac{\left\Vert p \right\Vert (\alpha \beta + \gamma_1)+\gamma_2}{1 - \left\Vert p \right\Vert a \alpha \beta (\hat{e} \cdot \hat{p})}+ (x \cdot y) \frac{\left\Vert p \right\Vert (a \alpha \beta x \cdot \hat{e} - \gamma_1)-\gamma_2}{1 - \left\Vert p \right\Vert a \alpha \beta (\hat{e} \cdot \hat{p})} = \alpha_1 + \beta_1 (x \cdot y).
\end{equation}

We thus identify values of $\left\Vert p \right\Vert$ where $\alpha_1, \alpha_2, \beta_1, \beta_2$ are bounded. Denote the largest $\left\Vert p \right\Vert$ that satisfies $\left\Vert p \right\Vert< \frac{1-a^2}{2a(1-a x \cdot \hat{e})}$ as $p_1$. We can now solve for $K$ such that $0 = H(K(\xi_2), \xi_1)$. Since we have Equation~\eqref{eq:ypyx}, we can then use $(y \cdot x)^2 + (y \cdot \hat{p})^2 + (y \cdot \hat{q})^2 = 1$ and solve for $ x \cdot y$ which yields:
\begin{equation}
(x \cdot y)^2 \left( 1 + \beta_1^2 + \beta_2^2 \right) + 2(x \cdot y) \left( \alpha_1 \beta_1 + \alpha_2 \beta_2 \right) + \left(\alpha_1^2 + \alpha_2^2-1 \right) = 0
\end{equation}
and thus:
\begin{equation}\label{eq:xdoty}
x \cdot y = \frac{-\alpha_1 \beta_1 - \alpha_2 \beta_2 \pm \sqrt{(\alpha_1 \beta_1 + \alpha_2 \beta_2)^2 - (1+\beta_1^2+\beta_2^2)(\alpha_1^2+\alpha_2^2-1)}}{1+ \beta_1^2 + \beta_2^2}.
\end{equation}

We check when $x \cdot \hat{e} = 1$, $\hat{e} \cdot \hat{p} = \hat{e} \cdot \hat{q} = 0$, as we then take $\left\Vert p \right\Vert \rightarrow 0$, we expect to recover $x \cdot y = 1$, and thus we need to take the positive sign. Therefore,
\begin{equation}\label{eq:r1expression}
x \cdot y = \frac{-\alpha_1 \beta_1 - \alpha_2 \beta_2 + \sqrt{(\alpha_1 \beta_1 + \alpha_2 \beta_2)^2 - (1+\beta_1^2+\beta_2^2)(\alpha_1^2+\alpha_2^2-1)}}{1+ \beta_1^2 + \beta_2^2}.
\end{equation}
Then, using this, we can solve for $y \cdot \hat{p} = \alpha_1 + \beta_2 (x \cdot y)$ and $y \cdot \hat{q} = \alpha_2 + \beta_2 (x \cdot y)$. This gives us the function $K$. We identify when the discriminant is positive.

As $\left\Vert p \right\Vert \rightarrow 0$, we compute the discriminant:
\begin{equation}
\frac{2a^4}{\beta^4}(\hat{e} \cdot \hat{p})^2 \left((\hat{e} \cdot \hat{p})^2 +(\hat{e} \cdot \hat{q})^2 \right) + 1 \geq 1 >0.
\end{equation}

The discriminant is either a smooth function of $\left\Vert p \right\Vert$ up to when it first equals zero, which we denote $p_2$, or the discriminant is positive up to $p_1$. We denote $\tilde{p}$ as equal to $p_2$ if it exists, or $p_1$ if $p_2$ does not exist. We then define:
\begin{equation}
\tilde{\Omega}_2 = [0, \tilde{p}] \times [-1,1]^3,
\end{equation}
and, then, $\tilde{\Omega}_1 = K(\tilde{\Omega}_2)$. Notice that $K$ is a differentiable function on $\tilde{\Omega}_2$. Since $x \cdot y = 1 + \mathcal{O}\left( \left\Vert p \right\Vert \right)$, we denote $\tilde{\xi}^{*} \neq 1$ as the largest value such that $[1,\tilde{\xi}^{*}]\times [-1,1]^3 \subset \tilde{\Omega}_1$.

Now, we solve for $p$ in terms of $y$. By Equation~\eqref{eq:system1}, we can readily solve for $\hat{e} \cdot \hat{q}$:
\begin{equation}
\hat{e} \cdot \hat{q} = \frac{y \cdot \hat{q} (1 - a x \cdot \hat{e})}{a(1 - x \cdot y)}.
\end{equation}
Using this expression for $\hat{e} \cdot \hat{q}$ and $0 = y \cdot \hat{p} (\hat{e} \cdot \hat{p}) + y \cdot \hat{q} (\hat{e} \cdot \hat{q}) + x \cdot y (x \cdot \hat{e}) - y \cdot \hat{e}$, we get:
\begin{equation}
\hat{e} \cdot \hat{p} = \frac{y \cdot \hat{e} - x \cdot y (x \cdot \hat{e}) - \frac{(y \cdot \hat{q})^2}{a(1 - x \cdot y)} + \frac{(y \cdot \hat{q})^2 x \cdot \hat{e}}{1 - x \cdot y}}{y \cdot \hat{p}}.
\end{equation}

Likewise, we get:
\begin{equation}
\left\Vert p \right\Vert = \frac{\frac{(y \cdot \hat{q})^2}{y \cdot \hat{p}} - y \cdot \hat{p} + \frac{a(1 - x \cdot y)}{y \cdot \hat{p}(1 - a x \cdot \hat{e})}(y \cdot \hat{e} + x \cdot y (x \cdot \hat{e}))}{\alpha(1 - a x \cdot \hat{e}) - 1 + x \cdot y}.
\end{equation}

These are differentiable functions of $x \cdot y, y \cdot \hat{p}, y \cdot \hat{q}$ and $y \cdot \hat{e}$ provided that $y \cdot \hat{p} \neq 0$ and $(x \cdot y, x \cdot \hat{e}, y \cdot \hat{e}) \in \Omega$. If $\left\Vert p \right\Vert = 0$, then $y \cdot \hat{p} = 0$. Of course, in this case $\hat{e} \cdot \hat{p}$ and $\hat{e} \cdot \hat{q}$ are undefined, but, importantly, we know that $p=0$. Otherwise, using Equation~\eqref{eq:ypyx}, we see that $y \cdot \hat{p} = 0$ is only possible when:
\begin{equation}
x \cdot y = \frac{\left\Vert p \right\Vert(\alpha \beta + \gamma_1)+\gamma_2}{\left\Vert p \right\Vert(a \alpha \beta x \cdot \hat{e} - \gamma_1)-\gamma_2}.
\end{equation}
For small values of $\left\Vert p \right\Vert$, this shows that we can only have $y \cdot \hat{p} = 0$ when $x \cdot y$ is near $-1$. Knowing that the largest value of $x \cdot y$ that is allowable is $\tilde{\xi}^{*}$, we find the value of $\left\Vert p \right\Vert$ that yields this value of $x \cdot y$:
\begin{equation}
\left\Vert p \right\Vert = \frac{\gamma_2 (1 + \tilde{\xi}^{*})}{\tilde{\xi}^{*}(a \alpha \beta x \cdot \hat{e} - \gamma_1)-\gamma_2}.
\end{equation}
Denote this value, if it exists, as $p_3$. Then, define $\tilde{p}_1 = \min \left\{\tilde{p} ,p_3 \right\}$. We use this to define:
\begin{equation}
\Omega_2 = [0, \tilde{p}_1] \times [-1,1]^3,
\end{equation}
and then define
\begin{equation}
\Omega_1 = K(\Omega_2).
\end{equation}
Denote then $\xi^{*}_{1}$ as the largest value such that $[1,\xi^{*}_1]\times [-1,1]^3 \subset \Omega_1$.

We will identify $\Omega \subset [-1,1]^3$ as
\begin{equation}\label{eq:ptpinequality}
\Omega = \left\{(x \cdot y, x \cdot \hat{e}, y \cdot \hat{e}) \in [-1,1]^3 :  x \cdot y > 1+ 2\frac{a-1}{a+1} \geq 1- \frac{2}{1-a^2}(1 - a x \cdot \hat{e})(1 - a y \cdot \hat{e})  \right\}.
\end{equation}
We denote $\xi^{*}_2 = 1 + 2 \frac{a-1}{a+1}$. We will revisit the quantities $\tilde{p}_1$, $\xi^{*}_1$, and $\xi^{*}_2$ in Section~\ref{sec:regularity} to see how they are used to show the existence of solutions of the point-to-point problem.

Now, we move on to verifying more of the hypotheses on the cost function.

\begin{lemma}\label{lemma:Fbounds}
For the cost function arising from the point-to-point problem, $c(x,y) = F(x \cdot y, x \cdot \hat{e}, y \cdot \hat{e})$, we have $F_1, F_2, F_{11}, F_{12}, F_{13}, F_{22}, F_{23}$ are bounded functions on the set $\Omega$, defined in Equation~\eqref{eq:ptpinequality}.
\end{lemma}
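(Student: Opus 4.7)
The plan is to parameterize by $s = x \cdot y$, $t = x \cdot \hat{e}$, $r = y \cdot \hat{e}$ and decompose the cost as $F(s,t,r) = \log N(s,t,r) - \log D(t,r)$, where $D := 2(1-at)(1-ar)$ and $N := D - (1-a^2)(1-s)$. Since $t,r \in [-1,1]$ and $a \in (0,1)$, the factors $(1-at)$ and $(1-ar)$ lie in the compact interval $[1-a, 1+a] \subset (0,\infty)$, so $D$ is smooth and bounded away from zero on $\Omega$, and all partial derivatives of $\log D$ are uniformly bounded. The lemma therefore reduces to bounding the corresponding derivatives of $\log N$.

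A direct chain-rule computation using $\partial_s N = 1-a^2$, $\partial_t N = -2a(1-ar)$, and $\partial_r N = -2a(1-at)$ yields closed-form rational expressions for each of the seven listed derivatives. Writing $\alpha := 1-a^2$ one obtains, for example,
\begin{equation*}
F_1 = \frac{\alpha}{N}, \qquad F_2 = \frac{-a\alpha(1-s)}{N(1-at)}, \qquad F_{11} = -\frac{\alpha^2}{N^2}, \qquad F_{12} = \frac{2a\alpha(1-ar)}{N^2},
\end{equation*}
and analogous expressions for $F_{13}$, $F_{22}$, $F_{23}$. In every case the numerator is a polynomial in $(s,t,r,a)$ bounded on $[-1,1]^3 \times (0,1)$, while the denominator is a product of positive powers of $N$, $(1-at)$, and $(1-ar)$. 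The factors $(1-at)$, $(1-ar)$ are already bounded below by $1-a > 0$, so the entire lemma reduces to establishing a uniform positive lower bound on $N$ over $\Omega$.

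For the lower bound on $N$, rewrite the defining inequality $x \cdot y > 1 + 2(a-1)/(a+1)$ as $1-s < 2(1-a)/(1+a)$. Combining with the elementary identity $(1-a^2)\cdot 2(1-a)/(1+a) = 2(1-a)^2$ and with $(1-at)(1-ar) \geq (1-a)^2$ (the second inequality in the definition of $\Omega$, which holds on all of $[-1,1]^2$) yields
\begin{equation*}
N = 2(1-at)(1-ar) - (1-a^2)(1-s) > 2\bigl[(1-at)(1-ar) - (1-a)^2\bigr] \geq 0.
\end{equation*}
This gives $N > 0$ pointwise on $\Omega$, and the strict inequality upgrades to a uniform lower bound $N \geq c_0(\Omega) > 0$ away from the single degenerate corner $t = r = 1$, where both sides of the displayed estimate simultaneously degenerate. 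Substituting back into the explicit rational formulas above then yields uniform bounds on all seven derivatives.

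The main obstacle is precisely this uniformity step: pointwise positivity of $N$ is immediate from the definition of $\Omega$, but $N \to 0$ as $s$ approaches its threshold while $(t,r) \to (1,1)$, which is the geometric configuration in which $x$, $y$, and $\hat{e}$ all coincide and the logarithm in the point-to-point cost function blows up. Away from this single corner everything is bookkeeping on the explicit rational expressions; uniformity near the corner must be extracted from the additional structure of the transport mapping developed in Section~\ref{sec:verify} (through the quantities $\tilde{p}_1$ and $\xi_1^*$), which keeps the arguments of $F$ in a compact subset of $\Omega$ bounded away from this singular point.
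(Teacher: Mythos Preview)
Your approach is essentially the same as the paper's: write out each partial derivative explicitly as a rational function whose denominator is a product of powers of $(1-at)$, $(1-ar)$, and the quantity you call $N$ (the paper writes it as $\alpha(1-a x\cdot\hat e)(1-a y\cdot\hat e)-1+x\cdot y$ with $\alpha=2/(1-a^2)$, which is your $N/(1-a^2)$), and observe that these denominators are positive on $\Omega$. The paper's proof is in fact terser than yours---it simply records each formula and appends ``$<\infty$''---so your explicit bookkeeping is at least as complete.

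You are right to flag the uniformity issue, and you have correctly located where it bites: $N\to 0$ as $(s,t,r)\to(\xi_2^*,1,1)$ inside $\Omega$, so the derivatives are \emph{not} uniformly bounded on the open set $\Omega$ as literally defined by~\eqref{eq:ptpinequality}. The paper's proof does not address this either. The resolution is exactly the one you point to: everywhere the lemma is actually used (Hypothesis~\ref{hyp:mainhypothesis}(5), the mixed-Hessian bounds, and the regularity in Section~\ref{sec:regularity}) one works on a slab $[\xi^*,1]\times[-1,1]^2$ with $\xi^*>\xi_2^*$ determined by $\tilde p_1$ and $\xi_1^*$, and on any such compact slab $N$ is bounded below by $\xi^*-\xi_2^*>0$ (up to the harmless factor $1-a^2$), which makes all seven bounds uniform. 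So your proof is complete for what is needed; the only discrepancy is that the lemma, read literally, asserts slightly more than is true and more than the paper's own argument establishes.
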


\begin{proof}

On the set $\Omega$, we have:
\begin{equation}\label{eq:f1}
F_1(x \cdot y, x \cdot \hat{e}, y \cdot \hat{e}) = \frac{1}{\alpha(1-a x \cdot \hat{e})(1 - a y \cdot \hat{e})-1 + x \cdot y}<\infty
\end{equation}
where $\alpha = 2/(1-a^2)$. Also,
\begin{equation}\label{eq:f2}
F_2(x \cdot y, x \cdot \hat{e}, y \cdot \hat{e}) = \frac{-\frac{a(1-x \cdot y)}{1 - a x\cdot \hat{e}}}{\alpha(1-a x \cdot \hat{e})(1 - a y \cdot \hat{e}) - 1 + x \cdot y}<\infty,
\end{equation}

\begin{equation}\label{eq:f11}
F_{11}(x \cdot y, x \cdot \hat{e}, y \cdot \hat{e}) = \frac{-1}{(\alpha(1-a x \cdot \hat{e})(1 - a y \cdot \hat{e})-1 + x \cdot y)^2}<\infty,
\end{equation}

\begin{equation}\label{eq:f12}
F_{12}(x \cdot y, x \cdot \hat{e}, y \cdot \hat{e}) = \frac{a \alpha (1 - a y \cdot \hat{e})}{(\alpha(1-a x \cdot \hat{e})(1 - a y \cdot \hat{e})-1 + x \cdot y)^2}<\infty,
\end{equation}

\begin{equation}\label{eq:f13}
F_{13}(x \cdot y, x \cdot \hat{e}, y \cdot \hat{e}) = \frac{a \alpha (1 - a x \cdot \hat{e})}{(\alpha(1-a x \cdot \hat{e})(1 - a y \cdot \hat{e})-1 + x \cdot y)^2}<\infty,
\end{equation}

\begin{equation}\label{eq:f22}
F_{22}(x \cdot y, x \cdot \hat{e}, y \cdot \hat{e}) = F_2 \left( \frac{a}{1-ax \cdot \hat{e}} + \frac{a \alpha(1-a y \cdot \hat{e})}{\alpha(1-a x \cdot \hat{e})(1 - a y \cdot \hat{e}) - 1 + x \cdot y} \right)<\infty,
\end{equation}

\begin{equation}\label{eq:f23}
F_{23}(x \cdot y, x \cdot \hat{e}, y \cdot \hat{e}) = F_2 \left( \frac{a \alpha(1-a x \cdot \hat{e})}{\alpha(1-a x \cdot \hat{e})(1 - a y \cdot \hat{e}) - 1 + x \cdot y} \right)<\infty.
\end{equation}
\end{proof}

\begin{lemma}
The point-to-point cost function satisfies $F_2(1, x \cdot \hat{e}, y \cdot \hat{e}) = 0$ and $F_2(x \cdot y, x \cdot \hat{e}, y \cdot \hat{e}) \neq 0$ for $x \cdot y \neq 0$.
\end{lemma}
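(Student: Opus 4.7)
The plan is straightforward: both claims follow immediately by inspecting the explicit formula for $F_2$ that was already computed in Lemma~\ref{lemma:Fbounds}. Specifically, from Equation~\eqref{eq:f2},
\begin{equation*}
F_2(x \cdot y, x \cdot \hat{e}, y \cdot \hat{e}) = \frac{-\dfrac{a(1-x \cdot y)}{1 - a x\cdot \hat{e}}}{\alpha(1-a x \cdot \hat{e})(1 - a y \cdot \hat{e}) - 1 + x \cdot y},
\end{equation*}
so the numerator contains the factor $(1 - x \cdot y)$, and I only need to control the denominator.

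First I would substitute $x \cdot y = 1$ into the numerator to obtain $F_2(1, x\cdot\hat{e}, y\cdot\hat{e}) = 0$, provided that the denominator does not simultaneously vanish. For this I appeal to the set $\Omega$ defined in Equation~\eqref{eq:ptpinequality}, on which
\begin{equation*}
\alpha(1 - a x \cdot \hat{e})(1 - a y \cdot \hat{e}) - 1 + x \cdot y > 0,
\end{equation*}
so in particular the denominator of $F_2$ is strictly positive on $\Omega$. This is exactly the condition that was already used in the proof of Lemma~\ref{lemma:Fbounds} to show $F_2$ is bounded, and it also immediately confirms that for $x \cdot y = 1$ the denominator is bounded away from zero, so the value $F_2(1, x\cdot\hat e, y\cdot\hat e) = 0$ is genuine (not a removable $0/0$).

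For the second claim, I would use the same positivity of the denominator on $\Omega$ to conclude that $F_2$ vanishes if and only if its numerator vanishes, i.e.\ if and only if $a(1 - x \cdot y)/(1 - a x \cdot \hat{e}) = 0$. Since $0 < a < 1$ and $|x \cdot \hat{e}| \leq 1$ force $1 - a x \cdot \hat{e} > 0$, the numerator vanishes precisely when $x \cdot y = 1$, and so $F_2(x \cdot y, x \cdot \hat{e}, y \cdot \hat{e}) \neq 0$ whenever $x \cdot y \neq 1$, as required. There is no real obstacle here; the entire argument is a one-line inspection of the explicit formula together with a reminder that $\Omega$ was chosen exactly to make the denominator positive.
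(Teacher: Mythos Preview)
Your proposal is correct and follows the same approach as the paper: both simply inspect the explicit formula~\eqref{eq:f2} for $F_2$ and observe that the numerator carries the factor $1 - x\cdot y$ while the denominator is strictly positive on $\Omega$. Your version is more careful in spelling out why the denominator cannot vanish (the paper's proof is a single sentence), but the argument is identical in substance.
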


\begin{proof}
From Equation~\eqref{eq:f2}, we get that $\lim_{x \cdot y \rightarrow 1} F_2(x \cdot y, x \cdot \hat{e} ,y \cdot \hat{e}) = 0$ and is not equal to zero otherwise.
\end{proof}

\begin{lemma}\label{lemma:f2p}
For the cost function arising in the point-to-point problem, we have $F_2 / \left\Vert p \right\Vert$ is bounded as $\left\Vert p \right\Vert \rightarrow 0$.
\end{lemma}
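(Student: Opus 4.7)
The approach will be to exploit the closed-form expression for $F_2$ from Equation~\eqref{eq:f2} in order to extract a factor $(1-x\cdot y)$:
\[
F_2 = -\frac{a(1-x\cdot y)}{(1-a x\cdot\hat{e})\, D}, \qquad D := \alpha(1-ax\cdot\hat{e})(1-ay\cdot\hat{e}) - 1 + x\cdot y.
\]
The claim then reduces to showing (i) the prefactor $a/((1-ax\cdot\hat{e})D)$ remains bounded as $\|p\|\to 0$, and (ii) $(1-x\cdot y) = O(\|p\|)$ as $\|p\|\to 0$.

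Item (i) is essentially immediate: $1 - ax\cdot\hat{e} \ge 1 - a > 0$ since $a \in (0,1)$, and Lemma~\ref{lemma:startatzero} (whose hypotheses were just verified for the point-to-point cost) forces $x\cdot y \to 1$ and $y\to x$ as $\|p\|\to 0$, so $D \to \alpha(1-ax\cdot\hat{e})^2 \ge \alpha(1-a)^2 > 0$ uniformly in the remaining variables. By continuity, $D$ is bounded below by a positive constant for all sufficiently small $\|p\|$, which yields a uniform upper bound on the prefactor.

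For item (ii), I will invoke the explicit construction of $K$ carried out earlier in Section~\ref{sec:verify}, together with Equation~\eqref{eq:r1expression}. On $\tilde{\Omega}_2$, the coefficients $\alpha_1,\beta_1,\alpha_2,\beta_2$ are smooth functions of $(\|p\|, x\cdot\hat{e}, \hat{e}\cdot\hat{p}, \hat{e}\cdot\hat{q})$; the discriminant under the square root is strictly positive in a neighborhood of $\|p\|=0$ (where, as computed in the construction, it equals $1$); and direct substitution recovers $x\cdot y = 1$ at $\|p\|=0$. Hence $x\cdot y$ is $C^1$ in $\|p\|$ with derivative uniformly bounded on the compact set $\{0\}\times[-1,1]^3$, and Taylor's theorem produces $1-x\cdot y = O(\|p\|)$ uniformly in the auxiliary variables as $\|p\|\to 0$.

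Combining (i) and (ii) delivers $F_2 = O(\|p\|)$, i.e., $F_2/\|p\|$ is bounded, which is the desired conclusion. The only mildly delicate step is the uniformity of the Taylor estimate across the auxiliary variables $(x\cdot\hat{e},\hat{e}\cdot\hat{p},\hat{e}\cdot\hat{q})$, but this is automatic from the smoothness of $K$ on $\tilde{\Omega}_2$ established earlier in Section~\ref{sec:verify}, coupled with compactness of $[-1,1]^3$.
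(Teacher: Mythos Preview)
Your argument is correct and follows the same strategy as the paper: write $F_2$ as $(1-x\cdot y)$ times a bounded factor, then use the explicit solution $K$ from Section~\ref{sec:verify} to obtain $1-x\cdot y = O(\|p\|)$ (the paper does this last step via explicit first-order expansions of $\alpha_1,\beta_1,\alpha_2,\beta_2$ rather than invoking Taylor abstractly). One minor inaccuracy: the discriminant at $\|p\|=0$ equals $\tfrac{2a^4}{\beta^4}(\hat{e}\cdot\hat{p})^2\bigl((\hat{e}\cdot\hat{p})^2+(\hat{e}\cdot\hat{q})^2\bigr)+1 \ge 1$, not exactly $1$, but this does not affect your conclusion.
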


\begin{proof}
We have that $\alpha_1= \mathcal{O}(1)$, $\alpha_2 = \mathcal{O}(1)$, $\beta_1 = \mathcal{O}(1)$ and $\beta_2 = \mathcal{O}(1)$ as $\left\Vert p \right\Vert \rightarrow 0$. Also, we compute:
\begin{align}
y \cdot \hat{p} &= (\gamma_2 + o(1))(1-(x \cdot y)) \\
y \cdot \hat{q} &= (\alpha_2 + o(1))(1-(x \cdot y)) \\
y \cdot \hat{e} &= x \cdot y \left( x \cdot \hat{e} - \gamma_2 (\hat{e} \cdot \hat{p}) - \alpha_2(\hat{e} \cdot \hat{q})+ o(1) \right) + \gamma_2 \left( \hat{e} \cdot \hat{p} \right) + \alpha_2 (\hat{e} \cdot \hat{q}) + o(1) \\
\alpha_1 &= \gamma_2 + \left\Vert p \right\Vert \left( \alpha \beta + \gamma_1 + a \alpha \beta \gamma_2 \hat{e} \cdot \hat{p} \right) + \mathcal{O}\left( \left\Vert p \right\Vert^2 \right) \\
\beta_1 &= -\gamma_2 - \left\Vert p \right\Vert \left( a \alpha \beta x \cdot \hat{e} - \gamma_1 - a \alpha \beta \gamma_2 \hat{e} \cdot \hat{p} \right) + \mathcal{O}\left( \left\Vert p \right\Vert^2 \right) \\
x \cdot y &= 1 + \mathcal{O} \left(\left\Vert p \right\Vert \right)
\end{align}
Therefore, since $F_2 = (1 - x \cdot y)\mathcal{O}(1)$ as $\left\Vert p \right\Vert \rightarrow 0$, we have $F_2 = \mathcal{O} \left( \left\Vert p \right\Vert \right)$ as $\left\Vert p \right\Vert \rightarrow 0$ and thus, $F_2 / \left\Vert p \right\Vert$ is bounded as $\left\Vert p \right\Vert \rightarrow 0$.
\end{proof}

%We have thus identified the sets $\Omega_1, \Omega_2, \Omega$ such that Hypothesis~\ref{hyp:mainhypothesis} holds for the point-to-point cost function.

\begin{lemma}
We show that, for $\xi^{*}<0$ the point-to-point cost function $\partial \left\Vert p \right\Vert / \partial (x \cdot y)<\infty$ for $x \cdot y = 0$.
\end{lemma}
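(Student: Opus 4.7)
The plan is to differentiate the explicit closed-form expression for $\|p\|$ derived immediately above, namely
\begin{equation*}
\|p\| = \frac{\dfrac{(y\cdot\hat{q})^2}{y\cdot\hat{p}} - y\cdot\hat{p} + \dfrac{a(1-x\cdot y)}{y\cdot\hat{p}(1-ax\cdot\hat{e})}\bigl(y\cdot\hat{e} + (x\cdot y)(x\cdot\hat{e})\bigr)}{\alpha(1-ax\cdot\hat{e}) - 1 + x\cdot y},
\end{equation*}
treating $y\cdot\hat{p}$, $y\cdot\hat{q}$, $y\cdot\hat{e}$ and $x\cdot\hat{e}$ as independent coordinates in the product $\Omega_1\times[-1,1]$, and then to evaluate the resulting partial derivative at $x\cdot y=0$. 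Since the numerator and denominator of the right-hand side are each rational in $x\cdot y$, the quotient rule reduces the entire question to the non-vanishing of two quantities at $x\cdot y=0$: the overall denominator $\alpha(1-ax\cdot\hat{e})-1+x\cdot y$, and the factor $y\cdot\hat{p}$ appearing inside the numerator.

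For the overall denominator, note that $\alpha = 2/(1-a^2)$ so that, for every $x\cdot\hat{e}\in[-1,1]$ and every $a\in(0,1)$,
\begin{equation*}
\alpha(1-ax\cdot\hat{e}) - 1 \;\geq\; \frac{2(1-a)}{1-a^2} - 1 \;=\; \frac{1-a}{1+a} \;>\; 0,
\end{equation*}
so at $x\cdot y=0$ the denominator is strictly positive, and hence its square (which will appear after the quotient rule) is bounded below uniformly. For the inner factor $y\cdot\hat{p}$, the analysis preceding the definition of $p_3$ in this subsection showed that $y\cdot\hat{p}=0$ can occur only when $x\cdot y$ lies close to $-1$ at small $\|p\|$; the hypothesis $\xi^{*}<0$ places the evaluation point $x\cdot y=0$ strictly inside the admissible region, away from that degeneracy, so $y\cdot\hat{p}$ is a strictly nonzero quantity at this point.

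With both nondegeneracies secured, one simply writes
\begin{equation*}
\frac{\partial \|p\|}{\partial (x\cdot y)} \;=\; \frac{(\partial_{x\cdot y}N)\,D - N\,(\partial_{x\cdot y}D)}{D^2},
\end{equation*}
where $N$ and $D$ denote the numerator and denominator of the expression above. Each of $N$, $\partial_{x\cdot y}N$, $D$, and $\partial_{x\cdot y}D$ is a finite sum of products of the quantities $x\cdot\hat{e}$, $y\cdot\hat{e}$, $y\cdot\hat{p}$, $y\cdot\hat{q}$, $1/(y\cdot\hat{p})$, and $1/(1-ax\cdot\hat{e})$, all of which are bounded at $x\cdot y=0$ under the assumption $\xi^{*}<0$. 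Hence both numerator and denominator of the quotient-rule expression are finite, and the denominator is bounded away from zero, giving $\partial\|p\|/\partial(x\cdot y)<\infty$ at $x\cdot y=0$.

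The only genuinely delicate point is the second of the two nondegeneracies: verifying that the line $x\cdot y=0$ does not intersect the locus $y\cdot\hat{p}=0$ in the admissible domain. This is exactly the role of the hypothesis $\xi^{*}<0$, and I expect it to be the main obstacle in the write-up; the remainder of the argument is a routine quotient-rule calculation on a rational function.
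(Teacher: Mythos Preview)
Your overall strategy---differentiate the closed-form expression for $\|p\|$ rather than compute an implicit-function-theorem Jacobian---is perfectly reasonable, and you correctly isolate the two denominators that must not vanish: the factor $\alpha(1-ax\cdot\hat e)-1+x\cdot y$ (which you dispatch cleanly) and the factor $y\cdot\hat p$. The paper's proof, by contrast, computes the $4\times4$ Jacobian of $H$ with respect to the variables $(\|p\|,\,y\cdot\hat p,\,y\cdot\hat q,\,y\cdot\hat e)$, evaluates it at $x\cdot y=0$, and obtains $2(y\cdot\hat p)F_1$; since $F_1\neq0$ on $\Omega$, this also reduces to the single claim $y\cdot\hat p\neq0$ at $x\cdot y=0$. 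So the two routes converge on exactly the same obstacle.

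The gap in your proposal is in how you clear that obstacle. You invoke the earlier observation that ``$y\cdot\hat p=0$ can occur only when $x\cdot y$ lies close to $-1$ at small $\|p\|$'' and then assert that $\xi^{*}<0$ keeps $x\cdot y=0$ away from this locus. That earlier observation is genuinely only asymptotic (valid for small $\|p\|$); nothing so far controls $\|p\|$ at the point $x\cdot y=0$, and the construction of $p_3$ was itself informal and tied to a different threshold $\tilde\xi^{*}$. So as written, the nondegeneracy $y\cdot\hat p\neq0$ at $x\cdot y=0$ is not actually established.

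The paper fills this gap by a direct algebraic check. If $x\cdot y=0$ and $y\cdot\hat p=0$, then the constraint $(x\cdot y)^2+(y\cdot\hat p)^2+(y\cdot\hat q)^2=1$ forces $y\cdot\hat q=\pm1$, and the relation $y\cdot\hat q=\dfrac{a\,\hat e\cdot\hat q}{1-a\,x\cdot\hat e}(1-x\cdot y)$ at $x\cdot y=0$ then gives $\pm1=\dfrac{a\,\hat e\cdot\hat q}{1-a\,x\cdot\hat e}$. Projecting $\hat e$ onto the $(x,\hat q)$-plane and squaring, this becomes a quadratic in $x\cdot\tilde e$ with negative discriminant whenever $a<1/\sqrt2$. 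Finally, the hypothesis $\xi^{*}<0$ forces $\xi^{*}_2=1+2\dfrac{a-1}{a+1}<0$, i.e.\ $a<1/3<1/\sqrt2$, so the quadratic has no real root and $y\cdot\hat p\neq0$. This is precisely where the hypothesis $\xi^{*}<0$ enters: it bounds the physical parameter $a$, not merely the location of $x\cdot y=0$ within some pre-built domain. Your write-up needs this argument (or an equivalent one) in place of the appeal to the earlier asymptotic remark.
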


\begin{proof}
We compute:
\begin{equation}\label{eq:det1}
\begin{vmatrix} 
1 & F_1 & 0 & F_{13} y \cdot \hat{p} + F_{23} y \cdot \hat{p} + F_{23} \hat{e} \cdot \hat{p} \\
0 & 0 & F_1 & 0 \\
0 & 2 y \cdot \hat{p} & 2 y \cdot \hat{q} & 0 \\
0 & \hat{e} \cdot \hat{p} & \hat{e} \cdot \hat{q} & -1
\end{vmatrix} = 2(y \cdot \hat{p}) F_1 \neq 0.
\end{equation}
Since $F_1 \neq 0$, this reduces to showing that $y \cdot \hat{p} \neq 0$ when $x \cdot y = 0$. This would require that:

\begin{equation}
\pm 1 = \frac{a \hat{e} \cdot \hat{q}}{1 - a (x \cdot \hat{e})}.
\end{equation}
We show that this is impossible. Denote $\tilde{e}$ to be the orthogonal projection of $\hat{e}$ onto the plane defined by $x$ and $\hat{q}$ and $\rho = \left\Vert \tilde{e} \right\Vert \leq 1$. Thus, we examine if it is possible for:
\begin{equation}
1 = \frac{ \rho a \sqrt{1-(x \cdot \tilde{e})^2}}{1 - \rho a (x \cdot \tilde{e})}.
\end{equation}

This becomes:
\begin{equation}
2 \rho^2 a^2(x \cdot \tilde{e})^2 - 2 \rho a (x \cdot \tilde{e}) + 1-\rho^2 a^2 = 0.
\end{equation}

This does not have real roots if $a <1/\sqrt{2} \leq 1/\sqrt{2} \rho$. This is fine, since, via Inequality~\eqref{eq:ptpinequality}, we get:
\begin{equation}
1 - \frac{2}{1-a^2}(1 - a x \cdot \hat{e})(1 - a y \cdot \hat{e}) \leq 1 - \frac{2(1-a)^2}{1-a^2},
\end{equation}
and this only is greater than or equal to zero for $a > 1/3$. Thus, we are guaranteed to not have real roots and thus the determinant in Equation~\eqref{eq:det1} is nonzero. This then proves the boundedness of $\partial \left\Vert p \right\Vert / \partial (x \cdot y)$ for the point-to-point cost function by finally using the result of Lemma~\ref{lemma:Fbounds}.
\end{proof}

\subsection{Mixed Hessian}\label{sec:mixedHessian}

In this section, we show that the mixed Hessian term is non-zero on $D_{\gamma}$, defined in Equation~\eqref{eq:dgamma}. We derive a formula for the mixed Hessian, whose geometric setup is shown in Figure~\ref{fig:formula2}. The derivation stems from the fact that
\begin{equation}
\left\vert D_{p} T \right\vert = \frac{1}{\left\vert \det D_{xy}c(x,y) \right\vert},
\end{equation}
where $y = T(x,p)$.

\begin{figure}[htp]
	\centering
	\includegraphics[width=0.8\textwidth]{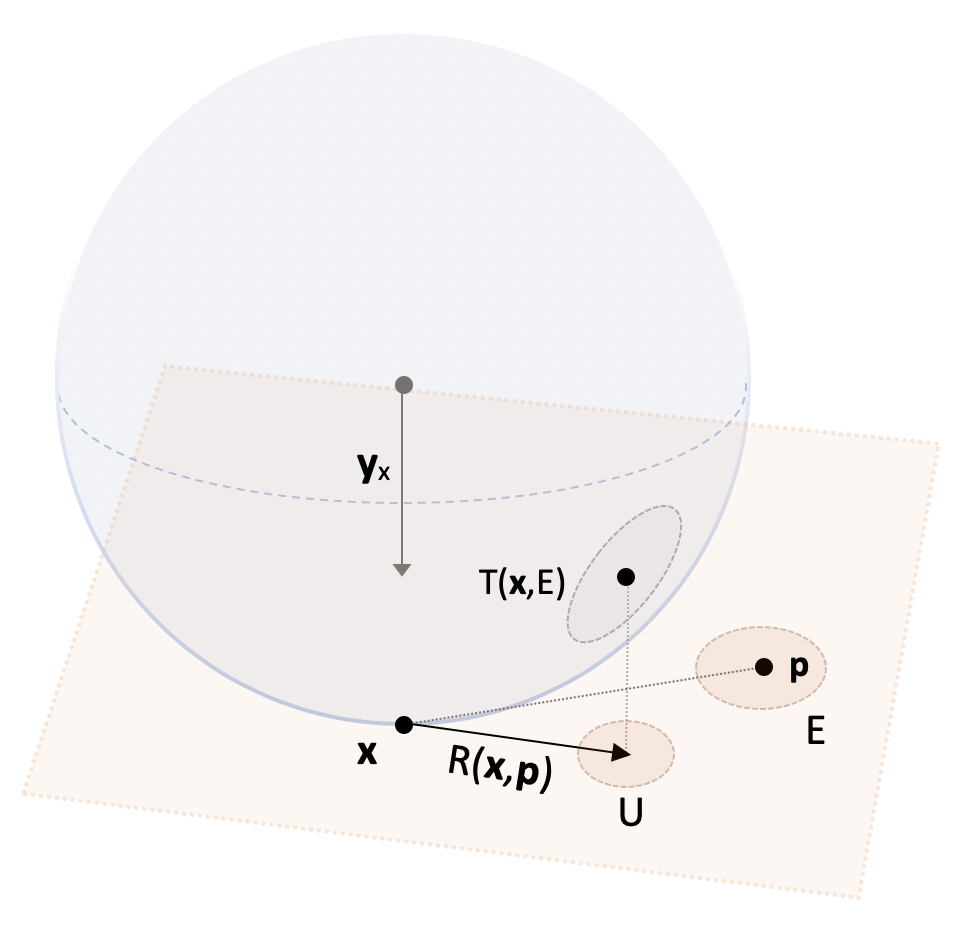}\label{fig:formula2}
	\caption{Change in area formula from tangent coordinates $p$ to coordinates on the sphere $T(x,p) = (x \cdot y) x + (y \cdot \hat{p}) p + (y \cdot \hat{q}) q$ via the coordinates $u$ of the orthogonal projection of $T(x,p)$ onto the tangent plane $\mathcal{T}_{x}$ at $x$.}
	\label{fig:formula2}
\end{figure}

Let $T(x,p) = (x \cdot y) x + (y \cdot \hat{p}) p + (y \cdot \hat{q}) q =  (x \cdot y) x + R(p)$. Then, defining $U = R(E) = \left\{ (y \cdot \hat{p}) \hat{p} + (y \cdot \hat{q}) \hat{q} \vert p \in E \right\}$. Then, $T(x,E) = \left\{ \left(u_1, u_2, \sqrt{1-u_1^2-u_2^2} \right) \vert (u_1, u_2) \in U \right\}$. We have:
\begin{equation}
\int_{E} \left\vert \det D_{p} T \right\vert dp = \int_{T(x,E)} dS
\end{equation}

\begin{equation}
= \int_{U} \frac{1}{\sqrt{1-\left\Vert u \right\Vert^2}} du
\end{equation}

\begin{equation}
= \int_{E} \frac{1}{\sqrt{1- \left\vert R(p) \right\vert^2}} \left\vert \det \nabla R(p) \right\vert dp
\end{equation}
and therefore,
\begin{equation}
\left\vert \det D_{p} T \right\vert = \frac{\left\vert \det \nabla R(p) \right\vert}{\sqrt{1-\left\vert R(p) \right\vert^2}}
\end{equation}
and
\begin{equation}
\left\vert \det D^2_{xy} c \right\vert = \frac{\sqrt{1-\left\vert R(p) \right\vert^2}}{\left\vert \det \nabla R(p) \right\vert}
\end{equation}

We also define:
\begin{equation}
T(x, p) = R_1 x + R_2 p + R_3 q.
\end{equation}

Thus, we get:
\begin{equation}\label{eq:mixedHessian}
\left\vert \det D^2_{xy} c \right\vert = \frac{\left\vert R_1 \right\vert}{\left\vert \det \nabla R(p) \right\vert}.
\end{equation}

We can derive an explicit expression for the mixed Hessian in terms of $R_1, R_2$ and $R_3$. Since $R(p) = R_2 p + R_3 q = R_2(p) (p_1, p_2) + R_3(p) (-p_2, p_1) = (p_1R_2 -p_2R_3, p_2R_2 + p_1R_3)$. Therefore,
\begin{equation}
\nabla R(p) = \begin{pmatrix}
R_2 + p_1 \frac{\partial R_2}{\partial p_1} - p_2 \frac{\partial R_3}{\partial p_1} & p_1 \frac{\partial R_2}{\partial p_2} - R_3 - p_2 \frac{\partial R_3}{\partial p_2} \\
p_2 \frac{\partial R_2}{\partial p_1} + R_3 + p_1 \frac{\partial R_3}{\partial p_1} & R_2 + p_2\frac{\partial R_2}{\partial p_2} + p_1 \frac{\partial R_3}{\partial p_2}.
\end{pmatrix}
\end{equation}

Thus,
\begin{multline}
\det \nabla R(p) = 
\left( R_2 + p_1 \frac{\partial R_2}{\partial p_1} - p_2 \frac{\partial R_3}{\partial p_1}\right)\left( R_2 + p_2\frac{\partial R_2}{\partial p_2} + p_1 \frac{\partial R_3}{\partial p_2} \right) - \\
\left(p_1 \frac{\partial R_2}{\partial p_2} - R_3 - p_2 \frac{\partial R_3}{\partial p_2} \right) \left( p_2 \frac{\partial R_2}{\partial p_1} + R_3 + p_1 \frac{\partial R_3}{\partial p_1} \right)
\end{multline}

\begin{multline}
= R_2^2 + p_2R_2 \frac{\partial R_2}{\partial p_2}  + p_1 R_2 \frac{\partial R_3}{\partial p_2}  + p_1R_2 \frac{\partial R_2}{\partial p_1} + p_1p_2\frac{\partial R_2}{\partial p_1} \frac{\partial R_2}{\partial p_2} +p_1^2 \frac{\partial R_2}{\partial p_1} \frac{\partial R_3}{\partial p_2}  - \\
p_2 R_2 \frac{\partial R_3}{\partial p_1}-p_2^2 \frac{\partial R_3}{\partial p_1} \frac{\partial R_2}{\partial p_2} - p_1p_2 \frac{\partial R_3}{\partial p_1} \frac{\partial R_3}{\partial p_2}  -p_1p_2 \frac{\partial R_2}{\partial p_2}\frac{\partial R_2}{\partial p_1} -p_1R_3 \frac{\partial R_2}{\partial p_2} - p_1^2 \frac{\partial R_2}{\partial p_2} \frac{\partial R_3}{\partial p_1} + \\
p_2 R_3 \frac{\partial R_2}{\partial p_1}  + R_3^2 + p_1 R_3 \frac{\partial R_3}{\partial p_1} + p_2^2 \frac{\partial R_3}{\partial p_2} \frac{\partial R_2}{\partial p_1}  + p_2 R_3 \frac{\partial R_3}{\partial p_2} + p_1 p_2 \frac{\partial R_3}{\partial p_2} \frac{\partial R_3}{\partial p_1} 
\end{multline}

\begin{multline}
= R_2^2 + R_3^2 + p_1R_2 \frac{\partial R_2}{\partial p_1}+ p_2R_2 \frac{\partial R_2}{\partial p_2}  + p_1 R_3 \frac{\partial R_3}{\partial p_1}+ p_2 R_3 \frac{\partial R_3}{\partial p_2}+ \\
+p_1 R_2 \frac{\partial R_3}{\partial p_2} - p_2 R_2 \frac{\partial R_3}{\partial p_1} + p_2 R_3 \frac{\partial R_2}{\partial p_1} -p_1R_3 \frac{\partial R_2}{\partial p_2} + \\
+p_1^2 \frac{\partial R_2}{\partial p_1} \frac{\partial R_3}{\partial p_2}  - p_1^2 \frac{\partial R_2}{\partial p_2} \frac{\partial R_3}{\partial p_1} -p_2^2 \frac{\partial R_3}{\partial p_1} \frac{\partial R_2}{\partial p_2} + p_2^2 \frac{\partial R_3}{\partial p_2} \frac{\partial R_2}{\partial p_1} 
\end{multline}

Let $(\nabla R_i)^{\perp} = \left(\frac{\partial R_i}{\partial p_2}, -\frac{\partial R_i}{\partial p_1} \right)$ for $i=2,3$. Then, we finally get
\begin{multline}\label{eq:fullmH}
\det \nabla R(p) = R_2^2 + R_3^2 + R_2 \nabla R_2 \cdot p + R_3 \nabla R_3 \cdot p + \left\Vert p \right\Vert^2 \nabla R_2 \cdot (\nabla R_3)^{\perp} - \\
R_2 (\nabla R_3)^{\perp} \cdot p + R_3 (\nabla R_2)^{\perp} \cdot p.
\end{multline}

Our goal in this section is to prove that for cost functions satisfying Hypothesis~\ref{hyp:mainhypothesis}, we have $\left\vert D^{2}_{xy} c(x,y) \right\vert \neq 0$ for some domain $D \subset \mathbb{S}^2 \times \mathbb{S}^2$. We begin by showing that the denominator of Equation~\eqref{eq:mixedHessian} never goes to infinity.

\begin{lemma}\label{lemma:boundedness}
Provided that $F_1, F_2, F_{11}, F_{12}, F_{13}, F_{22}, F_{23}$ are bounded, $F_1 \neq 0$ and $F_2\left\Vert p \right\Vert$ is bounded as $\left\Vert p \right\Vert \rightarrow 0$, then $\left\vert \det \nabla R(p) \right\vert$ is bounded.
\end{lemma}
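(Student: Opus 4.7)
The plan is to show that each of the seven terms in the formula for $\det \nabla R(p)$ given in Equation~\eqref{eq:fullmH} stays bounded on $\Omega_1 \times \Omega_2$. The only threat to boundedness is the singularity of the polar frame $\hat p,\hat q$ at $\left\Vert p \right\Vert = 0$, which can make individual derivatives $\partial R_i/\partial p_j$ blow up like $1/\left\Vert p \right\Vert$. I will show that in every term this potential singularity is compensated by an offsetting factor of $R_2$, $R_3$, or $\left\Vert p \right\Vert$.

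First I will read off the scaling of $R_2 = y \cdot \hat p$ and $R_3 = y \cdot \hat q$ near $\left\Vert p \right\Vert = 0$. The first two components of the system~\eqref{eq:system1} are linear in $R_2$ and $R_3$ and invert to
\begin{equation*}
R_2 = -\frac{\left\Vert p \right\Vert + F_2\,(\hat e \cdot \hat p)}{F_1}, \qquad R_3 = -\frac{F_2\,(\hat e \cdot \hat q)}{F_1}.
\end{equation*}
Since $F_1$ is nonzero on a compact set and therefore bounded away from zero, and since by hypothesis $F_2 / \left\Vert p \right\Vert$ stays bounded as $\left\Vert p \right\Vert \to 0$, this gives $R_2 = \mathcal{O}(\left\Vert p \right\Vert)$ and $R_3 = \mathcal{O}(\left\Vert p \right\Vert)$ near the origin of the tangent plane. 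In particular $R_2^2 + R_3^2$ is bounded everywhere.

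Next I will bound $\nabla R_2$ and $\nabla R_3$ via the implicit function theorem. By Hypothesis~\ref{hyp:mainhypothesis}(3) the matrix $D_{\xi_1} H$ is invertible on $\Omega_1 \times \Omega_2$, and combining this with the boundedness of $F_1,F_2,F_{11},F_{12},F_{13},F_{22},F_{23}$ from Hypothesis~\ref{hyp:mainhypothesis}(5) yields uniform bounds on $\partial R_i / \partial (\xi_2)_j$. Passing from $\xi_2$ to $(p_1,p_2)$ via the chain rule uses
\begin{equation*}
\frac{\partial \left\Vert p \right\Vert}{\partial p_j} = \hat p_j, \qquad \frac{\partial (\hat e \cdot \hat p)}{\partial p_j} = \frac{\tilde e_j - (\hat e \cdot \hat p)\hat p_j}{\left\Vert p \right\Vert},
\end{equation*}
with an analogous identity for $\hat e \cdot \hat q$; the rotating-frame contributions are $\mathcal{O}(1/\left\Vert p \right\Vert)$, while $x \cdot \hat e$ does not depend on $p$. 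Therefore $\nabla R_2$ and $\nabla R_3$ are $\mathcal{O}(1/\left\Vert p \right\Vert)$ near $p = 0$.

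Finally I will insert these scalings term by term into Equation~\eqref{eq:fullmH}. Away from $p = 0$ every term is continuous on the compact set $\Omega_1 \times \Omega_2$, so the only worry is the limit $\left\Vert p \right\Vert \to 0$. There the terms $R_2 \nabla R_2 \cdot p$, $R_3 \nabla R_3 \cdot p$, $R_2 (\nabla R_3)^{\perp}\!\cdot p$, and $R_3 (\nabla R_2)^{\perp}\!\cdot p$ all scale like $\mathcal{O}(\left\Vert p \right\Vert)\cdot \mathcal{O}(1/\left\Vert p \right\Vert)\cdot \mathcal{O}(\left\Vert p \right\Vert) = \mathcal{O}(\left\Vert p \right\Vert)$, while $\left\Vert p \right\Vert^2 \nabla R_2 \cdot (\nabla R_3)^{\perp}$ scales like $\mathcal{O}(\left\Vert p \right\Vert^2)\cdot \mathcal{O}(1/\left\Vert p \right\Vert^2) = \mathcal{O}(1)$. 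The main obstacle is this last, purely-quadratic-in-$\nabla R$ term: both gradients are separately singular and only their product together with the explicit $\left\Vert p \right\Vert^2$ weight yields a finite limit, so the argument rests on the fact that the singularity of $\nabla R_i$ is of precisely the polar-coordinate type, stemming from the variation of $\hat p$ and $\hat q$, rather than from the map $p \mapsto y$ itself. Combining the four bounded terms with the a priori bound $R_2^2 + R_3^2 \le 1$ concludes the proof.
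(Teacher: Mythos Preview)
Your overall strategy—bound $R_2,R_3$, bound $\nabla R_2,\nabla R_3$, then estimate Equation~\eqref{eq:fullmH} term by term—is exactly the paper's, but a misreading of the definitions derails the scaling argument. The paper sets $T(x,p)=R_1 x+R_2 p+R_3 q$ with the \emph{unnormalized} vectors $p$ and $q=x\times p$, so in fact $R_2=(y\cdot\hat p)/\lVert p\rVert$ and $R_3=(y\cdot\hat q)/\lVert p\rVert$; the displayed formulas you write down for ``$R_2,R_3$'' are the ones for $y\cdot\hat p$ and $y\cdot\hat q$. With the correct identification, the hypothesis on $F_2/\lVert p\rVert$ yields $R_2,R_3=\mathcal O(1)$, not $\mathcal O(\lVert p\rVert)$, which is precisely what the paper records.

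This distinction is decisive in the term $\lVert p\rVert^2\,\nabla R_2\cdot(\nabla R_3)^{\perp}$: once the $R_i$ are only $\mathcal O(1)$, boundedness requires $\nabla R_i=\mathcal O(1/\lVert p\rVert)$, and via the identity $\lVert p\rVert\,\nabla R_i=\nabla(y\cdot\hat p)-R_i\,\hat p$ this forces $\nabla(y\cdot\hat p)=\mathcal O(1)$. Your estimate $\nabla(y\cdot\hat p)=\mathcal O(1/\lVert p\rVert)$, coming from the polar-frame derivatives $\partial(\hat e\cdot\hat p)/\partial p_j$, is too coarse: combined with the corrected $R_i$ it would give $\nabla R_i=\mathcal O(1/\lVert p\rVert^{2})$ and the quadratic term would blow up. The paper closes this by asserting that $\nabla(y\cdot\hat p)$ is itself bounded; the underlying reason is that the chain-rule coefficients $K^2_3,K^2_4$ multiplying the singular frame derivatives arise from the third and fourth columns of $D_{\xi_2}H$, whose nonzero entries are $F_2,\,y\cdot\hat p,\,y\cdot\hat q$, each $\mathcal O(\lVert p\rVert)$, so the $1/\lVert p\rVert$ singularity cancels. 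With that in hand the term-by-term check of Equation~\eqref{eq:fullmH} proceeds exactly as you outlined.
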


\begin{proof}
Since:
\begin{align}
R_2 p &= (y \cdot \hat{p})\hat{p}, \\
R_3 q &= (y \cdot \hat{q})\hat{q},
\end{align}
we have:
\begin{align}
\hat{p} R_2 + \left\Vert p \right\Vert \nabla R_2 &= \nabla(y \cdot \hat{p}), \\
\hat{p} R_3 + \left\Vert p \right\Vert \nabla R_3 &= \nabla(y \cdot \hat{q}).
\end{align}

Thus, we find that we can control $\nabla R_2$ by $R_2$ and $\nabla(y \cdot \hat{p})$ and $\nabla R_3$ by $R_3$ and $\nabla(y \cdot \hat{q})$. Since, by the assumptions on our cost function, we have:
\begin{align}
y \cdot \hat{p} &= K^2(\left\Vert p \right\Vert, x \cdot \hat{e}, \hat{e} \cdot \hat{p}, \hat{e} \cdot \hat{q}), \\
y \cdot \hat{q} &= K^3(\left\Vert p \right\Vert, x \cdot \hat{e}, \hat{e} \cdot \hat{p}, \hat{e} \cdot \hat{q}),
\end{align}
where $K^{i}$ denotes the $i$th entry of the vector-valued function $K$ that satisfies Equation~\eqref{eq:HK}. We thus compute:
\begin{align}
\nabla (y \cdot \hat{p})= \nabla K^2 &= \hat{p} K^2_{1} + \gamma_{1} K^2_{3} + \gamma_{2}K^{2}_{4}, \\
\nabla (y \cdot \hat{q}) = \nabla K^3 &= \hat{p} K^3_{1} + \gamma_{1} K^3_{3} + \gamma_{2}K^{3}_{4},
\end{align}
where $\gamma_1$ and $\gamma_2$ are vectors of length at most $1$. We know that all $K^{i}_{j}$ are bounded by the assumptions on the cost function. Now, we just need bounds on $R_2$ and $R_3$. From Equation~\eqref{eq:system1}, we get:
\begin{align}
R_2(\left\Vert p \right\Vert) &= \frac{y \cdot \hat{p}}{\left\Vert p \right\Vert} = -\frac{1}{F_1(x \cdot y, x \cdot \hat{e}, y \cdot \hat{e})}-\frac{F_2(x \cdot y, x \cdot \hat{e}, y \cdot \hat{e})}{\left\Vert p \right\Vert F_1(x \cdot y, x \cdot \hat{e}, y \cdot \hat{e})}\hat{e} \cdot \hat{p} \\
R_3(\left\Vert p \right\Vert) &= \frac{y \cdot \hat{q}}{\left\Vert p \right\Vert} = -\frac{F_2(x \cdot y, x \cdot \hat{e}, y \cdot \hat{e})}{\left\Vert p \right\Vert F_1(x \cdot y, x \cdot \hat{e}, y \cdot \hat{e})}\hat{e} \cdot \hat{q}.
\end{align}

Thus, by the Hypotheses~\ref{hyp:mainhypothesis}, $F_1$ is bounded on $\Omega$ and $F_2/\left\Vert p \right\Vert$ is bounded as $\left\Vert p \right\Vert \rightarrow 0$. The quantity $\left\vert \det \nabla R(p) \right\vert$ can be bounded due to bounds on $R_2, R_3, \nabla R_2, \nabla R_3$ using the explicit expression derived in Equation~\eqref{eq:fullmH}.
\end{proof}

%An explicit computation of the mixed Hessian is done in Appendix~\ref{app:mixedHessian}. However, for the point-to-point reflector problem and the point-to-plane reflector problem, we can actually show directly under what conditions we have $\left\vert D_{xy}c(x,y) \right\vert >0$. By Equation~\eqref{eq:mixedHessianfull}, we need to find conditions that ensure that $R_1 = x \cdot y>0$ and $\nabla R_1 / \left\Vert p \right\Vert \neq \infty$, which is like the condition in the lens paper. This is done in a Lemma above, and also $R_1$ does not diverge since its Lipschitz for $\left\Vert p \right\Vert>0$ (at least below $p^{*}(x)$).

%The first sufficient condition, that $x \cdot y>0$ is easily obtained by just imposing this. For the point-to-plane reflector problem, as shown in Section~\ref{sec:defective}, this is a natural condition determined by the nature of the cost function. However, for the point-to-point reflector problem, Equation~\eqref{eq:ptpinequality} for $a<1/3$ the condition $x \cdot y>0$ is too restrictive.

Now that we have shown that the denominator of Equation~\eqref{eq:mixedHessian} never blows up. We need to verify that as $R_1 = x \cdot y \rightarrow 0$ (if this is allowed), the denominator $\left\vert \det \nabla R(p) \right\vert = o(x \cdot y)$, so therefore $\lim_{x \cdot y \rightarrow 0} \left\vert D^2_{xy} c(x,y) \right\vert \neq 0$.

\begin{lemma}\label{lemma:notzero}
Assuming the cost function satisfies Hypothesis~\ref{hyp:mainhypothesis}, we have

\begin{equation}
\lim_{x \cdot y \rightarrow 0} \left\vert D^2_{xy} c(x,y) \right\vert \neq 0.
\end{equation}
\end{lemma}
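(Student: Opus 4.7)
My plan centers on the spherical constraint $|y|^2 = 1$. Writing $y = R_1 x + R(p)$ with $R(p) = R_2 p + R_3 q$, this constraint reads $R_1^2 + |R(p)|^2 = 1$, so $|R(p)| = \sqrt{1-R_1^2}$. Differentiating with respect to $p$ produces the key identity
\begin{equation}
\bigl(\nabla R(p)\bigr)^T R(p) = -R_1 \nabla R_1,
\end{equation}
which says that the vector $R(p)$ lies nearly in the left cokernel of $\nabla R(p)$ when $R_1$ is small. The strategy is to convert this into a bound $|\det \nabla R(p)| \leq C|R_1|$ and then invoke Equation~\eqref{eq:mixedHessian} to obtain $|\det D^2_{xy} c(x,y)| = |R_1|/|\det \nabla R(p)| \geq 1/C > 0$.

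To execute the plan, note that $|R(p)| \to 1$ as $R_1 \to 0$, so for $|R_1|$ sufficiently small we have $|R(p)| \geq 1/2$, and dividing the identity above by $|R(p)|$ gives
\begin{equation}
\bigl|\bigl(\nabla R(p)\bigr)^T \hat{R}\bigr| \leq \frac{|R_1|\,|\nabla R_1|}{\sqrt{1-R_1^2}},
\end{equation}
where $\hat{R} = R/|R|$. Since $\sigma_{\min}(\nabla R) = \sigma_{\min}((\nabla R)^T) = \min_{|u|=1} |(\nabla R)^T u|$, this upper bound applied to the specific unit vector $\hat{R}$ forces $\sigma_{\min}(\nabla R(p)) \leq 2|R_1|\,|\nabla R_1|$ for $|R_1|$ small. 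On the other hand, the proof of Lemma~\ref{lemma:boundedness} established bounds on $R_2, R_3, \nabla R_2, \nabla R_3$ that also bound $\sigma_{\max}(\nabla R(p)) = \|\nabla R(p)\|_{\mathrm{op}} \leq C_2$. Since $|\det \nabla R(p)| = \sigma_{\min}\,\sigma_{\max}$ for $2\times 2$ matrices, we conclude $|\det \nabla R(p)| \leq 2 C_2 |R_1|\,|\nabla R_1|$, and substituting into Equation~\eqref{eq:mixedHessian} yields the desired lower bound.

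The main technical obstacle is verifying that $|\nabla R_1| = |\nabla_p (x \cdot y)|$ is uniformly bounded in a neighborhood of $\{R_1 = 0\}$. Fortunately, $R_1 \to 0$ forces $\|p\|$ to remain bounded away from $0$ (since $R_1 = 1$ at $p=0$ by Lemma~\ref{lemma:startatzero}) and away from the degeneracy value $\tilde p_1$, so the frame vectors $\hat p, \hat q$ depend smoothly on $p$ in this regime, and $C^1$-regularity of $y = y(p)$ follows from the smoothness of the implicit function $K$ guaranteed by Hypothesis~\ref{hyp:mainhypothesis}(3). This supplies a uniform constant $C_1$ with $|\nabla R_1| \leq C_1$, completing the argument with $C = 2 C_1 C_2$.
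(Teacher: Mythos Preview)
Your argument is correct and takes a genuinely different route from the paper's. The paper proceeds geometrically: for fixed $x,\hat e,\hat p$ it introduces an adapted frame $\hat s,\hat z$ in $T_x\mathbb{S}^2$ so that $R(p)=(y\cdot\hat s)\hat s$ exactly when $x\cdot y=0$, observes that $y\cdot\hat s$ and $y\cdot\hat z$ have a local maximum and minimum at the corresponding value $p^*$ of $\|p\|$, and then Taylor-expands in $p^*-\|p\|$ (using that $p^*-\|p\|$ is linear in $x\cdot y$ by Hypothesis~\ref{hyp:mainhypothesis}) to conclude that the first column of $\nabla R(p)$ in these coordinates vanishes to order $O(x\cdot y)$. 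Your approach instead differentiates the sphere constraint $R_1^2+|R(p)|^2=1$ once to obtain $(\nabla R(p))^T R(p)=-R_1\nabla R_1$ directly, and then reads off a bound on $\sigma_{\min}(\nabla R(p))$ via the unit vector $\hat R$; combined with the entrywise bounds on $\nabla R(p)$ coming from the proof of Lemma~\ref{lemma:boundedness} (valid in this regime since $\|p\|$ is bounded away from $0$), this yields $|\det\nabla R(p)|=\sigma_{\min}\sigma_{\max}\le C|R_1|$. Your route is shorter and avoids the auxiliary frame and Taylor expansion; what the paper's argument buys is an explicit identification of \emph{which} direction in $p$-space is responsible for the degeneracy (namely the radial direction $\hat p$), though this is not needed for the lemma as stated. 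One minor caveat: you should note that the bounds on $\nabla R_2,\nabla R_3$ extracted from Lemma~\ref{lemma:boundedness}'s proof carry a factor $1/\|p\|$, so they are uniform only away from $p=0$---but you already argue that $R_1\to 0$ forces $\|p\|$ bounded away from $0$, so this is handled.
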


\begin{proof}
If $\xi^{*}>0$, then we are done.

If $\xi^{*}\leq 0$, we need to show that as $R_1 = x \cdot y \rightarrow 0$, the mixed Hessian~\eqref{eq:mixedHessian} does not approach zero. Fix $x, \hat{e}, \hat{p}$. Let $\hat{s}$ and $\hat{z} = x \times \hat{s}$ be fixed directions in the tangent plane $T_{x}$, chosen such that:
\begin{equation}
R(p) = (y \cdot \hat{s})\hat{s},
\end{equation}
precisely when $y$ satisfies $x \cdot y = 0$.  To be clear, $R(p)$ is not necessarily equal to $(y \cdot \hat{s}) \hat{s}$ in general. Then, the function $y \cdot \hat{s}$ has a local maximum as a function of $\left\Vert p \right\Vert$, since $y \cdot \hat{s}$ attains the maximum value of $1$ only when $x \cdot y = 0$. Likewise, since $y \cdot \hat{z} = 0$ when $x \cdot y = 0$, the function $y \cdot \hat{z}$ has a local minimum when $x \cdot y = 0$.

Let us first see how $\det \nabla R(p) \rightarrow 0$ as $x \cdot y \rightarrow 0$. Fix $x$, $\hat{e}$ and a coordinate system $(u_1, u_2)$ in the tangent plane. We compute:
\begin{equation}\label{eq:Rpmatrix}
\nabla R(p) = \begin{pmatrix} \frac{\partial}{\partial p_1} (y \cdot \hat{s})s_1 + \frac{\partial}{\partial p_1} (y \cdot \hat{z})z_1 & \frac{\partial}{\partial p_2} (y \cdot \hat{s})s_1+\frac{\partial}{\partial p_2} (y \cdot \hat{z})z_1 \\ \frac{\partial}{\partial p_1} (y \cdot \hat{s})s_2+\frac{\partial}{\partial p_1} (y \cdot \hat{z})z_2 & \frac{\partial}{\partial p_2} (y \cdot \hat{s})s_2 + \frac{\partial}{\partial p_2} (y \cdot \hat{z})z_2 \end{pmatrix}.
\end{equation}
After an orthogonal change of coordinates, defined by the tangent coordinates $\hat{t}_1 = \hat{p}, \hat{t}_2 = \hat{q}$, we have new coordinates $(v_1, v_2)$. We now fix this choice of coordinates. Thus, the new derivative $\partial (y \cdot \hat{s})/\partial p_1$ computes the change in $(y \cdot \hat{s})$ given perturbations in the direction of $\hat{p}$. This is precisely changes in $\left\Vert p \right\Vert$, so all the partial derivatives in the first column become equal to zero when $x \cdot y = 0$. Therefore, $\det \nabla R(p) = 0$ when $x \cdot y = 0$.

Now we will show that, asymptotically, $\lim_{x \cdot y \rightarrow 0} \det \nabla R(p) = o(x \cdot y)$. Let $x \cdot y = \xi \left( \left\Vert p \right\Vert, x \cdot \hat{e}, \hat{e} \cdot \hat{p}, \hat{e} \cdot \hat{q} \right)$. Fix $x, \hat{e}, \hat{p}$ and let $p^{*}$ denote the smallest value of $\left\Vert p \right\Vert$ such that
\begin{equation}\label{eq:zerocondition}
0 = \xi \left( p^{*}, x \cdot \hat{e}, \hat{e} \cdot \hat{p}, \hat{e} \cdot \hat{q} \right).
\end{equation}

The functions $y \cdot \hat{s}$ and $y \cdot \hat{z}$ are local maxima and minima at $p^{*}$, respectively, as a function of $\left\Vert p \right\Vert$, and $p^{*}-\left\Vert p \right\Vert = L(x \cdot y) + \mathcal{O} \left( (x \cdot y)^2 \right)$ by Hypothesis~\ref{hyp:mainhypothesis} and by Equation~\eqref{eq:zerocondition} for some constant $0<L<\infty$ as $x \cdot y \rightarrow 0$. Therefore, we may write
\begin{equation}
y \cdot \hat{s} = 1 - c\left(p^{*} - \left\Vert p \right\Vert \right)^2 + \mathcal{O}\left(p^{*} - \left\Vert p \right\Vert \right)^3
\end{equation}
as $\left\Vert p \right\Vert \rightarrow p^{*}$ for some $c\geq0$. Thus,
\begin{equation}
\frac{\partial}{\partial p_1}(y \cdot \hat{s}) =  2c\left(p^{*} - \left\Vert p \right\Vert \right) + \mathcal{O}\left(p^{*} - \left\Vert p \right\Vert \right)^2
\end{equation}
as $\left\Vert p \right\Vert \rightarrow p^{*}$. Thus, $y \cdot \hat{s} = \mathcal{O}(x \cdot y)$ as $x \cdot y \rightarrow 0$ and, likewise, $\frac{\partial}{\partial p_1}(y \cdot \hat{z}) = \mathcal{O}(x \cdot y)$ as $x \cdot y \rightarrow 0$. Hence, by Equation~\eqref{eq:Rpmatrix}, we have $\left\vert \det \nabla R(p) \right\vert = \mathcal{O}(x \cdot y)$ as $x \cdot y \rightarrow 0$, which means that there exist constants $\zeta_0$ and $C>0$ such that $\left\vert \det \nabla R(p) \right\vert \leq C \left\vert x \cdot y \right\vert $ for $\left\vert x \cdot y \right\vert \leq \zeta_0$ and thus,
\begin{equation}
\lim_{x \cdot y \rightarrow 0} \left\vert D^2_{xy} c \right\vert = \lim_{x \cdot y \rightarrow 0} \frac{\left\vert x \cdot y \right\vert}{\left\vert \det \nabla R(p) \right\vert} \geq \lim_{x \cdot y \rightarrow 0} \frac{\left\vert x \cdot y \right\vert}{C\left\vert x \cdot y \right\vert} = C>0.
\end{equation}
\end{proof}

Then, we have the following result.

\begin{theorem}
For cost functions satisfying Hypothesis~\ref{hyp:mainhypothesis}, $\left\vert D^{2}_{xy} c(x,y) \right\vert \neq 0$ on $D_{\gamma} \subset \mathbb{S}^2 \times \mathbb{S}^2$, where $D_{\gamma}$ is defined by Equation~\eqref{eq:dgamma}.
\end{theorem}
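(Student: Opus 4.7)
The plan is to invoke the formula $\left\vert \det D^2_{xy} c \right\vert = \left\vert R_1 \right\vert/\left\vert \det \nabla R(p) \right\vert$ derived in Equation~\eqref{eq:mixedHessian}, and then to combine Lemma~\ref{lemma:boundedness} (which bounds the denominator uniformly from above) with Lemma~\ref{lemma:notzero} (which handles the potentially singular regime $R_1 \to 0$). Since $D_\gamma$ is defined so that Hypothesis~\ref{hyp:mainhypothesis} applies throughout, both lemmas are at our disposal on $D_\gamma$.

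The first step is to fix the threshold $\zeta_0 > 0$ and the constant $C > 0$ supplied by Lemma~\ref{lemma:notzero}, so that $|\det \nabla R(p)| \leq C\,|x \cdot y|$ whenever $|x \cdot y| \leq \zeta_0$. I would then partition $D_\gamma$ into an ``away from zero'' piece $\{(x,y) \in D_\gamma : |x \cdot y| \geq \zeta_0\}$ and a ``near zero'' piece $\{(x,y) \in D_\gamma : |x \cdot y| < \zeta_0\}$. On the first piece, Lemma~\ref{lemma:boundedness} produces a uniform upper bound $M$ on $|\det \nabla R(p)|$, giving $|\det D^2_{xy} c| \geq \zeta_0/M > 0$. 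On the second piece, the refined bound $|\det \nabla R(p)| \leq C\,|x \cdot y|$ directly forces $|\det D^2_{xy} c| \geq 1/C > 0$, including at any degenerate points $x \cdot y = 0$, where the ratio must be interpreted by continuity. Taking the minimum of the two constants yields a uniform positive lower bound on $|\det D^2_{xy} c|$ throughout $D_\gamma$, which is strictly stronger than the claimed nonvanishing.

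The main obstacle is the interpretation of the formula at points where $R_1 = 0$: the expression $|R_1|/|\det \nabla R(p)|$ is formally $0/0$ there, and one has to argue that the genuine mixed Hessian (a smooth object wherever the cost is smooth) agrees with the limiting value of the ratio, so that non-vanishing of the limit in Lemma~\ref{lemma:notzero} actually translates into non-vanishing of $|\det D^2_{xy} c|$ at such points. Lemma~\ref{lemma:notzero} was engineered precisely to supply this asymptotic control, so the content of the present theorem is essentially bookkeeping on top of that lemma.

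A secondary technical obstacle is verifying that $D_\gamma$, whose precise definition in Equation~\eqref{eq:dgamma} sits outside this excerpt, actually lies inside the region in which both lemmas are valid: the associated $(\xi_1, \xi_2)$ must remain in $\Omega_1 \times \Omega_2$ so that $K$ is differentiable and the relevant derivatives of $F$ are bounded on $\Omega$. Assuming $D_\gamma$ was constructed with these constraints in mind (which the preceding development strongly suggests), the two-piece covering argument outlined above completes the proof.
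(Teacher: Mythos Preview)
Your proposal is correct and follows exactly the same approach as the paper: invoke Equation~\eqref{eq:mixedHessian} and combine Lemma~\ref{lemma:boundedness} with Lemma~\ref{lemma:notzero}, using the definition of $D_\gamma$ to guarantee both lemmas apply. The paper's own proof is in fact considerably terser than yours---it simply asserts that the two lemmas together with the definition of $D_\gamma$ give the result---so your explicit partition into $\{|x\cdot y|\geq \zeta_0\}$ and $\{|x\cdot y|<\zeta_0\}$ and your discussion of the $0/0$ interpretation are a welcome elaboration rather than a deviation.
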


\begin{proof}
The result follows, given that the cost function satisfies Hypothesis~\ref{hyp:mainhypothesis}, then Lemma~\ref{lemma:boundedness} and Lemma~\ref{lemma:notzero} both hold. Therefore, by the definition of $D_{\gamma}$, we can claim that $\left\vert D^{2}_{xy} c(x,y) \right\vert \neq 0$ on $D_{\gamma}$.
\end{proof}

\begin{corollary}
The point-to-point cost function satisfies $\left\vert D^{2}_{xy} c(x,y) \right\vert \neq 0$ on $D_{\gamma} \subset \mathbb{S}^2 \times \mathbb{S}^2$.
\end{corollary}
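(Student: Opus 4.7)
The plan is to verify that the point-to-point cost function from Equation~\eqref{eq:cost1} satisfies every item of Hypothesis~\ref{hyp:mainhypothesis} on the sets $\Omega_1, \Omega_2, \Omega$ constructed explicitly in Section~\ref{sec:verify}, and then invoke the preceding theorem. Since that theorem asserts $\left\vert D^2_{xy}c(x,y)\right\vert \neq 0$ on $D_\gamma$ for any cost function meeting the hypothesis, the corollary reduces to a bookkeeping exercise that collects the lemmas already established.

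First, I would dispatch items (1) and (2) of Hypothesis~\ref{hyp:mainhypothesis} by pointing to the explicit formulas worked out in Section~\ref{sec:verify}: Equation~\eqref{eq:ypyx} together with Equation~\eqref{eq:r1expression} provide the explicit solvability of $H(\xi_1,\xi_2)=0$ for $\xi_1$ in terms of $\xi_2$ on $\Omega_2$, while the closed-form expressions derived for $\hat{e}\cdot\hat{q}$, $\hat{e}\cdot\hat{p}$, and $\left\Vert p\right\Vert$ in terms of the $y$-variables give the converse solvability on $\Omega_1$. Items (3) and (4) are then immediate: smoothness of $K$ on $\Omega_2$ follows from the positivity of the discriminant in Equation~\eqref{eq:r1expression} (which is why $\tilde{p}_1$ was defined the way it was), and the nonvanishing of $\left\vert D_{\xi_2; x\cdot\hat{e}}H\right\vert$ is exactly the content of the lemma on $\partial\left\Vert p\right\Vert/\partial(x\cdot y)$ at the end of Section~\ref{sec:verify}.

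Items (5) through (8) are already discharged verbatim by the lemmas in Section~\ref{sec:verify}: item (5) is Lemma~\ref{lemma:Fbounds}; item (6), the nonvanishing of $F_1$, is read off from Equation~\eqref{eq:f1} (the denominator there is strictly positive on $\Omega$ by Inequality~\eqref{eq:ptpinequality}); item (7) is Lemma~\ref{lemma:f2p}; and item (8) is the intervening lemma showing $F_2(1,x\cdot\hat{e},y\cdot\hat{e})=0$ while $F_2\neq 0$ for $x\cdot y\neq 1$. Item (9) ($C^4$ regularity) does not need to be verified here, since, as the hypothesis itself notes, that smoothness is only used for the $\mathbf{Aw}/\mathbf{As}$ analysis and the \textit{a priori} estimates, neither of which enters the theorem on the mixed Hessian.

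With all required items of Hypothesis~\ref{hyp:mainhypothesis} verified, applying the preceding theorem yields $\left\vert D^2_{xy}c(x,y)\right\vert\neq 0$ on $D_\gamma$ for the point-to-point cost function. There is no genuine obstacle remaining; the only thing to be careful about is lining up the domains $\Omega_1,\Omega_2,\Omega$ constructed in Section~\ref{sec:verify} with the abstract domains named in Hypothesis~\ref{hyp:mainhypothesis}, so that the application of the theorem is over exactly the $D_\gamma$ specified in Equation~\eqref{eq:dgamma}.
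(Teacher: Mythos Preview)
Your proposal is correct and follows essentially the same approach as the paper: the paper's own proof is the single sentence ``This follows from the computations in Section~\ref{sec:verify},'' and what you have written is precisely an itemized expansion of that sentence, matching each clause of Hypothesis~\ref{hyp:mainhypothesis} to the explicit formulas and lemmas in Section~\ref{sec:verify} and then invoking the preceding theorem. The only minor misattribution is that the lemma on $\partial\left\Vert p\right\Vert/\partial(x\cdot y)$ is not quite the same determinant as $\left\vert D_{\xi_2;x\cdot\hat{e}}H\right\vert$; item~(4) is really handled in the paper by the direct solve for $\hat{e}\cdot\hat{q}$, $\hat{e}\cdot\hat{p}$, $\left\Vert p\right\Vert$ together with the argument that $y\cdot\hat{p}\neq 0$ on the chosen $\Omega_2$.
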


\begin{proof}
This follows from the computations in Section~\ref{sec:verify}.
\end{proof}

\subsection{Cost-Sectional Curvature}

We compute the $3 \times 3$ matrix Hessian for the cost function $c(x,y) = F(x \cdot y, x \cdot \hat{e}, y \cdot \hat{e})$ using Euclidean derivatives, since we use the metric on the sphere induced by the standard Euclidean metric. Given a function $f: \mathbb{R}^3 \rightarrow \mathbb{R}$, we compute the Hessian on the sphere as follows:
\begin{equation}
\nabla^{2}_{xx} f(x) = D^2_{xx} f(x) - (D_{x} f(x) \cdot x) \text{Id},
\end{equation}
where $D$ are the standard (three-dimensional) Euclidean (ambient) derivatives. We will thus compute the term $\nabla^{2}_{xx} c(x,y) \vert_{y = T}$. We get:
\begin{equation}
D_{x} F(x \cdot y, x \cdot \hat{e}, y \cdot \hat{e}) \cdot x = (x \cdot y)F_1 + (x \cdot \hat{e})F_2
\end{equation}

We also compute:
\begin{equation}
\frac{\partial^2}{\partial x_{i} x_{j}} F(x \cdot y, x \cdot \hat{e}, y \cdot \hat{e}) = y_{i} y_{j} F_{11} + y_{i}e_{j}F_{12} + y_{j}e_{i}F_{12} + e_{i}e_{j}F_{22}
\end{equation}

Thus, we get:
\begin{multline}
\nabla^{2}_{xx} F(x \cdot y, x \cdot \hat{e}, y \cdot \hat{e}) = \\
F_{11} A + F_{12} (B + B^{T}) +F_{22} C - \left( (x \cdot y)F_1 + (x \cdot \hat{e})F_2 \right) \begin{pmatrix} 1 & 0 & 0 \\ 0 & 1 & 0 \\ 0 & 0 & 1 \end{pmatrix},
\end{multline}
where $(A)_{ij} = y_{i}y_{j}$, $(B)_{ij} = e_{i}y_{j}$, and $(C)_{ij} = e_{i}e_{j}$.

%\begin{equation}
%\nabla^{2}_{xx} F(x \cdot y) = \frac{\beta'(\left\Vert p \right\Vert) - \left\Vert p \right\Vert \cot \beta(\left\Vert p \right\Vert)}{\sin^2 \beta (\left\Vert p \right\Vert)} \begin{pmatrix} y_{1}^2 & y_1y_2 & y_1 y_3 \\ y_{1}y_2 & y_2^2 & y_2 y_3 \\ y_{1}y_3 & y_2y_3 & y_3^2 \end{pmatrix} + \left\Vert p \right\Vert \cot \beta(\left\Vert p \right\Vert) \begin{pmatrix} 1 & 0 & 0 \\ 0 & 1 & 0 \\ 0 & 0 & 1 \end{pmatrix}
%\end{equation}

Recall that
\begin{equation}
y_{i} = x_i(x \cdot y) + \frac{p_i}{\left\Vert p \right\Vert}(y \cdot \hat{p}) + \frac{q_i}{\left\Vert p \right\Vert} (y \cdot \hat{q}),
\end{equation}

%\begin{equation}
%0 = (y \cdot \hat{q})F_1 + (\hat{e} \cdot \hat{q})F_2
%\end{equation}

%\begin{equation}
%\left\Vert p \right\Vert = -(y \cdot \hat{p} F_1 +(\hat{e} \cdot \hat{p})F_2)
%\end{equation}

%\begin{equation}
%(x \cdot y)^2 = 1 - (y \cdot \hat{p})^2 - (y \cdot \hat{q})^2
%\end{equation}
and
\begin{equation}
q_{i} = x_{\phi(i+1)}p_{\phi(i+2)} - x_{\phi(i+2)}p_{\phi(i+1)},
\end{equation}
where $\phi(i) = \left( (i-1) \ \text{mod} \ 3 \right)+1$. This Hessian can thus be entirely expressed as a function of $x, p$. Thus, we have:
\begin{multline}
\left( \nabla^{2}_{xx} c(x, y) \vert_{y = T} \right)_{kl} = f_1(p)x_kx_l +f_2(p) \left( x_kp_l + x_l p_k \right) +f_3(p) \left( x_kq_l + x_l q_k \right) + \\
f_4(p)p_kp_l +f_5(p) \left( p_kq_l + p_l q_k \right) + f_6(p)q_kq_l + f_7(p)(e_{k}x_{l}+e_{l}x_{k}) +f_8(p)(e_k p_l + e_l p_k)+ \\
f_9(p)(e_k q_l + e_l q_k) + f_{10}(p)e_{k}e_{l} + f_{11}(p)\delta_{kl},
\end{multline}
where
\begin{align}\label{eq:ffunctions}
f_1(p) &= (x \cdot y)^2F_{11} \\
f_2(p) &= (x \cdot y)(y \cdot \hat{p})F_{11}/\left\Vert p \right\Vert =  \\
f_3(p) &= (x \cdot y)(y \cdot \hat{q})F_{11}/\left\Vert p \right\Vert \\
f_4(p) &= (y \cdot \hat{p})^2F_{11}/\left\Vert p \right\Vert^2 \\
f_5(p) &= (y \cdot \hat{p})(y \cdot \hat{q})F_{11}/\left\Vert p \right\Vert^2 \\
f_6(p) &= (y \cdot \hat{q})^2F_{11}/\left\Vert p \right\Vert^2 \\
f_7(p) &= (x \cdot y)F_{12} \\
f_8(p) &= (y \cdot \hat{p})F_{12}/\left\Vert p \right\Vert \\
f_9(p) &= (y \cdot \hat{q})F_{12}/\left\Vert p \right\Vert \\
f_{10}(p) &= F_{22} \\
f_{11}(p) &= -(x \cdot y)F_1 - (x \cdot \hat{e})F_2
\end{align}

Now, we proceed to take derivatives with respect to $p$:
\begin{multline}
D_{p_{i}}\left( \nabla^{2}_{xx} c(x, y) \vert_{y = T} \right)_{kl} = \\
D_{p_{i}}f_1(p)x_kx_l +D_{p_{i}}f_2(p) (x_{k}p_l + x_l p_k) + f_2(p) (x_k \delta_{il} + x_l \delta_{ik}) + D_{p_{i}}f_3(p) (x_k q_l + x_l q_k) + \\
f_3(p) \left( x_k \left(x_{\phi(l+1)} \delta_{i \phi(l+2)} - x_{\phi(l+2)}\delta_{i \phi(l+1)} \right)+x_l \left(x_{\phi(k+1)} \delta_{i \phi(k+2)} - x_{\phi(k+2)}\delta_{i \phi(k+1)} \right) \right) + \\
D_{p_{i}} f_4(p) p_k p_l + f_{4}(p) (\delta_{ik} p_l + p_{k} \delta_{il}) + D_{p_{i}} f_5(p) (p_k q_l+p_l q_k) + \\
f_5(p) \big( \delta_{ik}q_{l} + p_k(x_{\phi(l+1)} \delta_{i \phi(l+2)} - x_{\phi(l+2)} \delta_{i\phi(l+1)})+ \\
\delta_{il}q_{k} + p_l(x_{\phi(k+1)} \delta_{i \phi(k+2)} - x_{\phi(k+2)} \delta_{i\phi(k+1)}) \big) + \\
D_{p_{i}}f_6(p) q_k q_l + f_6(p) \left( x_{\phi(k+1)}\delta_{i \phi(k+2)} - x_{\phi(k+2)} \delta_{i\phi(k+1)} \right)q_l + \\
f_6(p) \left(x_{\phi(l+1)} \delta_{i\phi(l+2)} - x_{\phi(l+2)} \delta_{i \phi(l+1)} \right)q_k +D_{p_{i}} f_{7}(p) (e_k x_l + e_l x_k) + \\
D_{p_{i}}f_8(p)(e_k p_l + e_l p_k) +f_8(p)(e_k \delta_{il} + e_{l} \delta_{ik}) + D_{p_{i}}f_9(p) (e_k q_l + e_l q_k) + \\
f_9(p) \left( e_k(x_{\phi(l+1)}\delta_{i\phi(l+2)} - x_{\phi(l+2)}\delta_{i\phi(l+1)}) + e_l(x_{\phi(k+1)}\delta_{i\phi(k+2)} - x_{\phi(k+2)} \delta_{i\phi(k+1)}) \right) + \\
D_{p_{i}}f_{10}(p) e_{k}e_{l} + D_{p_{i}}f_{11}(p) \delta_{kl}
\end{multline}

Taking another derivative, we get
\begin{multline}
D_{p_{i}p_{j}}\left( \nabla^{2}_{xx} c(x, y) \vert_{y = T} \right)_{kl} = \\
D_{p_{i}p_{j}}f_1(p) x_k x_l + D_{p_{i} p_{j}}f_{2}(p) (x_k p_l+x_l p_k) + D_{p_{i}}f_2(p) (x_k \delta_{jl}+x_l \delta_{jk}) + \\
D_{p_{j}} f_2(p) (x_k \delta_{il}+ x_l \delta_{ik}) + 
D_{p_{i}p_{j}} f_3(p) (x_k q_l+x_l q_k) + \\
D_{p_{i}}f_3(p) \big( x_k \left( x_{\phi(l+1)} \delta_{j\phi(l+2)} - x_{\phi(l+2)}\delta_{j\phi(l+1)} \right)+ \\
x_l \left( x_{\phi(k+1)} \delta_{j\phi(k+2)} - x_{\phi(k+2)}\delta_{j\phi(k+1)} \right) \big)+ \\
D_{p_{j}}f_3(p)\big(x_k \left(x_{\phi(l+1)} \delta_{i \phi(l+2)} - x_{\phi(l+2)}\delta_{i \phi(l+1)} \right)+\\
x_l \left(x_{\phi(k+1)} \delta_{i \phi(k+2)} - x_{\phi(k+2)}\delta_{i \phi(k+1)} \right) \big) + \\
D_{p_{i}p_{j}}f_4(p)p_k p_l + D_{p_{i}} f_4(p) \left( p_k \delta_{jl} + p_{l} \delta_{jk} \right) + D_{p_{j}}f_4(p) \left( \delta_{ik}p_l + \delta_{il}p_k \right) + \\
f_4(p) \left( \delta_{ik}\delta_{jl} + \delta_{jk}\delta_{il} \right) + D_{p_{i}p_{j}}f_5(p) (p_k q_l+p_l q_k) + \\
D_{p_{i}}f_5(p) \big( \delta_{jk}q_l + p_k(x_{\phi(l+1)}\delta_{j\phi(l+2)} - x_{\phi(l+2)} \delta_{j\phi(l+1)}) + \\
\delta_{jl}q_k + p_l(x_{\phi(k+1)}\delta_{j\phi(k+2)} - x_{\phi(k+2)} \delta_{j\phi(k+1)}) \big) + \\
D_{p_{j}}f_5(p) \big( \delta_{ik}q_{l} + p_k(x_{\phi(l+1)} \delta_{i \phi(l+2)} - x_{\phi(l+2)} \delta_{i\phi(l+1)})+ \\
\delta_{il}q_{k} + p_l(x_{\phi(k+1)} \delta_{i \phi(k+2)} - x_{\phi(k+2)} \delta_{i\phi(k+1)}) \big) + \\
f_5(p) \left( \delta_{ik}(x_{\phi(l+1)}\delta_{j \phi(l+2)} - x_{\phi(l+2)}\delta_{j\phi(l+1)}) + \delta_{jk} \left( x_{\phi(l+1)}\delta_{i \phi(l+2)} - x_{\phi(l+2)} \delta_{i \phi(l+1)} \right) \right) + \\
f_5(p) \left( \delta_{il}(x_{\phi(k+1)}\delta_{j \phi(k+2)} - x_{\phi(k+2)}\delta_{j\phi(k+1)}) + \delta_{jl} \left( x_{\phi(k+1)}\delta_{i \phi(k+2)} - x_{\phi(k+2)} \delta_{i \phi(k+1)} \right) \right) + \\
D_{p_{i}p_{j}} f_6(p) q_k q_l + \\
D_{p_{i}}f_6(p) \left(q_l(x_{\phi(k+1)}\delta_{j \phi(k+2)} - x_{\phi(k+2)}\delta_{j\phi(k+1)}) + q_k(x_{\phi(l+1)} \delta_{j\phi(l+2)} - x_{\phi(l+2)} \delta_{j\phi(l+2)}) \right) + \\
D_{p_{j}}f_6(p) \left( x_{\phi(k+1)}\delta_{i \phi(k+2)} - x_{\phi(k+2)} \delta_{i\phi(k+1)} \right)q_l + \\
f_6(p) \left( x_{\phi(k+1)}\delta_{i \phi(k+2)} - x_{\phi(k+2)} \delta_{i\phi(k+1)} \right) \left(x_{\phi(l+1)} \delta_{j\phi(l+2)}-x_{\phi(l+2)}\delta_{j\phi(l+1)} \right) + \\
D_{p_{j}}f_6(p) \left(x_{\phi(l+1)} \delta_{i\phi(l+2)} - x_{\phi(l+2)} \delta_{i \phi(l+1)} \right)q_k + \\
f_6(p) \left(x_{\phi(l+1)} \delta_{i\phi(l+2)} - x_{\phi(l+2)} \delta_{i \phi(l+1)} \right) \left( x_{\phi(k+1)}\delta_{j \phi(k+2)} - x_{\phi(k+2)} \delta_{j\phi(k+2)}\right) + \\
D_{p_{i}p_{j}}f_7(p) \left( e_kx_l+e_lx_k \right) + D_{p_{i}p_{j}}f_8(p) (e_kp_l+e_lp_k) + D_{p_{i}}f_8(p)(e_k \delta_{jl} + e_l \delta_{jk}) + \\
D_{p_{j}}f_8(p)(e_k\delta_{il}+e_l\delta_{ik}) + D_{p_{i}p_{j}}f_9(p) (e_kq_l+e_lq_k) + \\
D_{p_{i}}f_9(p) \left(e_k(x_{\phi(l+1)}\delta_{j\phi(l+2)}-x_{\phi(l+2)}\delta_{j\phi(l+1)}) + e_l(x_{\phi(k+1)}\delta_{j\phi(k+2)} - x_{\phi(k+2)} \delta_{j \phi(k+1)}) \right) + \\
D_{p_{j}}f_9(p) \left( e_k(x_{\phi(l+1)}\delta_{i\phi(l+2)} - x_{\phi(l+2)}\delta_{i\phi(l+1)}) + e_l(x_{\phi(k+1)}\delta_{i\phi(k+2)} - x_{\phi(k+2)} \delta_{i\phi(k+1)}) \right) + \\
D_{p_{i}p_{j}}f_{10}(p)e_ke_l + D_{p_{i}p_{j}}f_{11}(p)\delta_{kl}
\end{multline}

Now, we compute:
\begin{multline}
\sum_{i, j, k, l}D_{p_{i}p_{j}}\left( \nabla^{2}_{xx} c(x, y) \vert_{y = T} \right)_{kl} \xi_{i}\xi_j\eta_k \eta_l = \\
\left\langle D^2 f_1(p) \xi, \xi \right\rangle (x \cdot \eta)^2 + 2\left\langle D^2 f_2(p) \xi, \xi \right\rangle (x \cdot \eta)(p \cdot \eta) + \\
4(Df_2(p) \cdot \xi)(x \cdot \eta)(\xi \cdot \eta) + 2\left\langle D^2 f_3(p) \xi, \xi \right\rangle (x \cdot \eta)(q \cdot \eta) + 4(Df_3(p)\cdot \xi)(x \cdot \eta)(\eta \cdot (x \times \xi)) + \\
\left\langle D^2 f_4(p) \xi, \xi \right\rangle (p \cdot \eta)^2 + 4 (Df_4(p) \cdot \xi)(p \cdot \eta)(\xi \cdot \eta) +f_4(p) (\xi \cdot \eta)^2 + \\
2\left\langle D^2 f_5(p) \xi, \xi \right\rangle (p \cdot \eta)(q \cdot \eta) + 4(Df_5(p) \cdot \xi)\left( (q \cdot \eta)(\xi \cdot \eta)+(p \cdot \eta)(\eta \cdot (x \times \xi)) \right) + \\
4f_5(p) (\xi \cdot \eta)(\eta \cdot (x \times \xi)) + \left\langle D^2 f_6(p) \xi, \xi \right\rangle (q \cdot \eta)^2 + 4(D f_6(p) \cdot \xi)(q \cdot \eta)(\eta \cdot (x \times \xi)) + \\
2 f_6(p) (\eta \cdot (x \times \xi))^2 + 2 \left\langle D^2 f_7(p) \xi, \xi \right\rangle(\hat{e} \cdot \eta)(x \cdot \eta) + 2 \left\langle D^2 f_8(p) \xi, \xi \right\rangle (\hat{e} \cdot \eta)(p \cdot \eta) + \\
4 (D f_8(p) \cdot \xi) (\hat{e} \cdot \eta)(\xi \cdot \eta) + 2 \left\langle D^2 f_9(p) \xi, \xi \right\rangle (\hat{e} \cdot \eta)(q \cdot \eta) + 4(D f_9(p) \cdot \xi)(\hat{e} \cdot \eta)(\eta \cdot (x \times \xi)) + \\
\left\langle D^2 f_{10}(p) \xi, \xi \right\rangle(\hat{e} \cdot \eta)^2 + \left\langle D^2 f_{11}(p) \xi, \xi \right\rangle \left\vert \eta \right\vert^2
\end{multline}

Since $\xi \cdot \eta = 0$ and $x \cdot \eta = x \cdot \xi = 0$, this reduces to:
\begin{multline}\label{eq:cscfull}
\sum_{i, j, k, l}D_{p_{i}p_{j}}\left( \nabla^{2}_{xx} c(x, y) \vert_{y = T} \right)_{kl} \xi_{i}\xi_j\eta_k \eta_l = 
\left\langle D^2 f_4(p) \xi, \xi \right\rangle (p \cdot \eta)^2 + \\
2\left\langle D^2 f_5(p) \xi, \xi \right\rangle (p \cdot \eta)(q \cdot \eta) + 4(Df_5(p) \cdot \xi)(p \cdot \eta)(\eta \cdot (x \times \xi)) + \\
\left\langle D^2 f_6(p) \xi, \xi \right\rangle (q \cdot \eta)^2 + 4(D f_6(p) \cdot \xi)(q \cdot \eta) (\eta \cdot (x \times \xi)) + \\
2 f_6(p) \left\vert \xi \right\vert^2 \left\vert \eta \right\vert^2 + 2 \left\langle D^2 f_8(p) \xi, \xi \right\rangle (\hat{e} \cdot \eta)(p \cdot \eta) + 2 \left\langle D^2 f_9(p) \xi, \xi \right\rangle (\hat{e} \cdot \eta)(q \cdot \eta) + \\
4(D f_9(p), \xi)(\hat{e} \cdot \eta)(\eta \cdot (x \times \xi)) + \left\langle D^2 f_{10}(p) \xi, \xi \right\rangle(\hat{e} \cdot \eta)^2 + \left\langle D^2 f_{11}(p) \xi, \xi \right\rangle \left\vert \eta \right\vert^2.
\end{multline}

\subsubsection{Checking the Cost-Sectional Curvature Condition for the Point-to-Point Cost Function}

\begin{theorem}
The point-to-point cost function does not satisfy the $\mathbf{Aw}$ condition.
\end{theorem}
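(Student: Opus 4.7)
To show that $\mathbf{Aw}$ fails for the point-to-point cost function, the plan is to exhibit a single configuration $(x,y,\xi,\eta)$ with $\xi,\eta \in T_x\mathbb{S}^2$, $\xi \perp \eta$, and $\mathfrak{G}_c(x,y)(\xi,\eta) < 0$. Because $\mathbf{Aw}$ is a pointwise, every-direction requirement, one explicit counterexample falsifies it; no global negativity statement is needed.

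The strategy is to collapse the long formula~\eqref{eq:cscfull} by choosing coordinates in which most terms vanish. Take $\hat{e}\perp x$, orient the tangent frame at $x$ so that $\hat{p}$ is parallel to $\hat{e}$ (so $\hat{e}\cdot\hat{p}=1$ and $\hat{e}\cdot\hat{q}=0$), and set $\xi = \hat{q}$, $\eta = \hat{p}$. Then $p\cdot\eta = \|p\|$, $q\cdot\eta = 0$, $\hat{e}\cdot\eta = 1$, and $\eta\cdot(x\times\xi) = -1$; moreover the second line of~\eqref{eq:system1} together with $\hat{e}\cdot\hat{q} = 0$ and $F_1\neq 0$ from Hypothesis~\ref{hyp:mainhypothesis} forces $y\cdot\hat{q} = 0$ identically along the chosen family, so that $f_3, f_5, f_6, f_9$ vanish. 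The surviving expression for $\mathfrak{G}_c(x,y)(\xi,\eta)$ is then a short combination of $\langle D^2 f_4 \xi,\xi\rangle$, $\langle D^2 f_8\xi,\xi\rangle$, $\langle D^2(f_{10}+f_{11})\xi,\xi\rangle$, together with the first-order contributions $Df_5\cdot\xi$ and $Df_9\cdot\xi$, each of which is an explicit rational function of $a$, $\|p\|$, and the point-to-point derivatives $F_1,F_2,F_{11},F_{12},F_{22}$ listed in Lemma~\ref{lemma:Fbounds}.

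The final step is to evaluate this reduced quantity and verify strict negativity. The cleanest tactic is a Taylor expansion in $\|p\|$ about zero: by Lemma~\ref{lemma:startatzero} and Lemma~\ref{lemma:f2p} one has $x\cdot y = 1 + \mathcal{O}(\|p\|)$, $y\cdot\hat{p} = \mathcal{O}(\|p\|)$, and $F_2 = \mathcal{O}(\|p\|)$, so the reduced expression admits an expansion $\Phi_0(a) + \|p\|\,\Phi_1(a) + \mathcal{O}(\|p\|^2)$ whose coefficients involve only $F$ and its first and second partials evaluated at $(1,x\cdot\hat{e},y\cdot\hat{e})$. One then checks $\Phi_0(a) < 0$ throughout the admissible range $a \in (0,1)$, or, if preferred, throughout the subrange $a > 1/3$ appearing after~\eqref{eq:ptpinequality}. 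As a sanity backup one can instead fix a single convenient pair, say $a = 1/2$ and a small $\|p\|>0$, and evaluate numerically using the closed-form~\eqref{eq:r1expression} for $x\cdot y$.

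The main obstacle is bookkeeping rather than any conceptual difficulty: computing $Df_i$ and $D^2 f_i$ with respect to $p$ requires the chain rule through $x\cdot y,\ y\cdot\hat{p},\ y\cdot\hat{q},\ y\cdot\hat{e}$, each of which depends on $p$ implicitly through~\eqref{eq:system1}. Restricting the calculation to the $\|p\|\to 0$ jet from the outset keeps only first- and second-order data of the implicit solution in play, reducing the whole argument to explicit arithmetic with the formulas~\eqref{eq:f1}--\eqref{eq:f23}. That is the route I would follow.
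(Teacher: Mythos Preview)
Your strategy is sound in outline, but the configuration you picked is substantially more laborious than the one the paper uses, and the decisive computation is never carried out.

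The paper takes $x=-\hat{e}$ rather than $\hat{e}\perp x$. Then $\hat{e}$ is \emph{normal} to $T_x\mathbb{S}^2$, so $\hat{e}\cdot\hat{p}=\hat{e}\cdot\hat{q}=\hat{e}\cdot\eta=\hat{e}\cdot\xi=0$ identically for every $p,\xi,\eta$ in the tangent plane, and all of $f_7,f_8,f_9,f_{10}$ in~\eqref{eq:cscfull} drop out at once. The second line of~\eqref{eq:system1} still gives $y\cdot\hat{q}=0$, and now every surviving $f_i$ depends on $p$ only through $\|p\|$. Choosing $\eta\perp p$ (so $p\cdot\eta=0$) kills the $f_4$ term as well, leaving the single contribution $\langle D^2 f_{11}(p)\xi,\xi\rangle|\eta|^2$ with $\xi\parallel\hat{p}$. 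Since $f_{11}$ is radial, this is a one-variable calculus question; the paper computes $x\cdot y$ explicitly and obtains $f_{11}''(0)=2>0$, which violates the required concavity and hence $\mathbf{Aw}$.

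By contrast, your choice $\hat{e}\perp x$ puts $\hat{e}$ inside the tangent plane, so $\hat{e}\cdot\eta=1$ and the $f_8$, $f_{10}$ blocks remain, as do the cross terms $Df_5\cdot\xi$ and $Df_9\cdot\xi$ (the functions $f_5,f_9$ vanish on your slice but their $\hat{q}$-derivatives need not). Each of these must be pushed through the implicit differentiation of~\eqref{eq:system1}, and your proposal stops at ``check $\Phi_0(a)<0$'' without ever computing $\Phi_0$. Since the sign you need emerges from a cancellation among several rational expressions in $a$, that verification \emph{is} the proof; deferring it to a numerical ``sanity backup'' leaves the argument incomplete. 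If you want to finish along your line, you will have to carry the expansion through; but the paper's choice $x=-\hat{e}$ collapses the problem to a single second derivative and is the route to take.
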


\begin{proof}
If $x = -\hat{e}$, then $\hat{e} \cdot p = \hat{e} \cdot q = \hat{e} \cdot \eta = \hat{e} \cdot \xi = 0$. Also, by Equation~\eqref{eq:system1}, we have $y \cdot \hat{q} = 0$, so we must check the reduced necessary condition:

\begin{multline}\label{eq:cscreducedmore}
\sum_{i, j, k, l}D_{p_{i}p_{j}}\left( \nabla^{2}_{xx} c(x, y) \vert_{y = T} \right)_{kl} \xi_{i}\xi_j\eta_k \eta_l = \\
\left\langle D^2 f_4(p) \xi, \xi \right\rangle (p \cdot \eta)^2 + \left\langle D^2 f_{11}(p) \xi, \xi \right\rangle \left\vert \eta \right\vert^2 \leq -C_0 \left\vert \xi \right\vert^2 \left\vert \eta \right\vert^2.
\end{multline}

Since this should be true for all $p, \eta \in T_{x}$, the simplest necessary condition then is to check that $f_{11}(p) = -(x \cdot y) F_{1} - F_2$ is strictly concave as a function of $p$. Since $\hat{e} \cdot \hat{p} = \hat{e} \cdot \hat{q} = 0$, then $f_{11}(p)$ will be simply a function of $\left\Vert p \right\Vert$. Thus, we can check if $f_{11}(p)$ is concave with respect to $\left\Vert p \right\Vert$.

%In this case, the first equation in Equation~\eqref{eq:system1} becomes:

%\begin{equation}
%-\left\Vert p \right\Vert \left( \alpha(1-a)(1-a (x \cdot y))-1 + x \cdot y \right) = y_p.
%\end{equation}

%Therefore, expressing $\alpha$ in terms of $a$ and simplifying, we get:

%\begin{equation}
%y_p = \left\Vert p \right\Vert \left( \frac{a-1}{a+1} \right)(1+x \cdot y).
%\end{equation}

%Then, using $y_x^2 + y_p^2 = 1$, we derive:

%\begin{equation}
%y_x = \frac{1 - \frac{(a-1)^2}{(a+1)^2} \left\Vert p \right\Vert}{1 + \frac{(a-1)^2}{(a+1)^2} \left\Vert p \right\Vert}.
%\end{equation}

%We also find that $y_e = y_x$, so we get:

%\begin{equation}
%-f_{11}(p) = \frac{y_x - a}{2(1-a y_x)+(y_x-1)(1-a)},
%\end{equation}
%which to me looks strictly convex, so this passes the test. In addition, the function $f_4(p)$ is a constant. So, this cost function satisfies these necessary conditions.

For $x \cdot \hat{e} = -1$, we get $y \cdot \hat{e} = -x \cdot y$ and $\hat{e} \cdot \hat{p} = \hat{e} \cdot \hat{q} = 0$. Therefore, by Equation~\eqref{eq:system1}, we get $y \cdot \hat{q} = 0$ and:

\begin{equation}
-\left\Vert p \right\Vert \left( \alpha(1+a)(1+a (x \cdot y))-1 + x \cdot y \right) = y \cdot \hat{p}.
\end{equation}

So, we get:
\begin{equation}
y \cdot \hat{p} = -\left\Vert p \right\Vert \left( \frac{1+a}{1-a} \right) \left( 1 + x \cdot y \right),
\end{equation}
which, by Equation~\eqref{eq:system1} again, leads to:
\begin{equation}
x \cdot y = \frac{1 - \left\Vert p \right\Vert^2 \left( \frac{1+a}{1-a} \right)^2}{1+\left\Vert p \right\Vert^2 \left( \frac{1+a}{1-a} \right)^2}.
\end{equation}

Therefore, by Equation~\eqref{eq:ffunctions}, we get:
\begin{equation}
f_{11}(p) = -\frac{\frac{1}{1+a} (x \cdot y + a)}{\frac{2}{1-a}(1+a x \cdot y)-1 + x \cdot y}.
\end{equation}

This function not concave. We visualize this for $a=0.2$ and $a=0.5$. This is plotted in Figure~\ref{fig:f11}.

\begin{figure}
\includegraphics[width=0.8\textwidth]{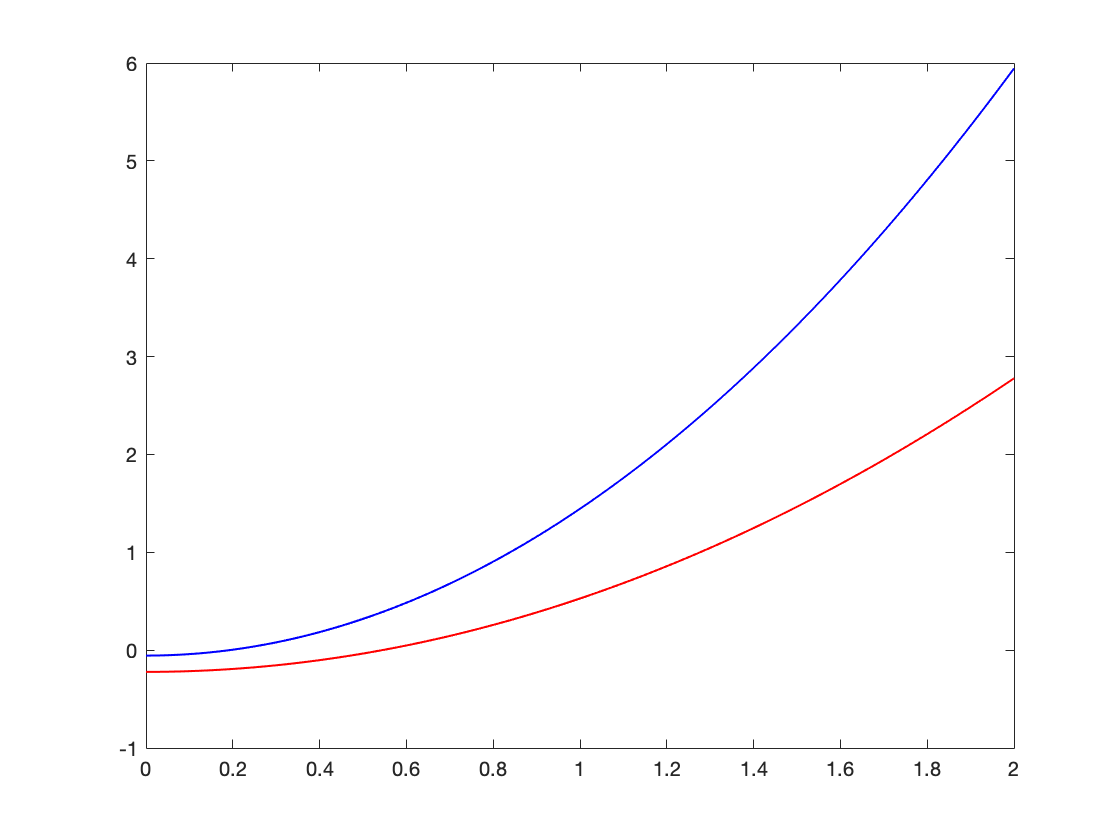}
\caption{The function $f_{11}$ plotted as a function of $\left\Vert p \right\Vert$ for $x \cdot \hat{e} = -1$ and $a=0.2$ (red) and $a = 0.5$ (blue).}\label{fig:f11}
\end{figure}

To show this is not concave, we compute $f_{11}(0)$. Denote $x \cdot y = \xi(\left\Vert p \right\Vert)$. Then, we compute that $\xi'(0) = 0$. Therefore, we get:
\begin{equation}
f_{11}''(0) = \frac{\xi''(0) \left( \frac{2a}{1-a}H(0) - \frac{1}{1+a}L(0) \right)}{L(0)^2},
\end{equation}
where $H(\xi) = \frac{a}{1+a} + \frac{\xi}{1+a}$ and $L(\xi) = \frac{2}{1-a}-1 + \xi \left( \frac{2a}{1-a}+1 \right)$. Therefore, $H(\xi(0)) = 1$ and $L(\xi(0)) = 2 \frac{1+a}{1-a}$. Thus, we compute:
\begin{equation}
f_{11}''(0) = 2.
\end{equation}
\end{proof}

This demonstrates that the point-to-point cost function does not satisfy the MTW condition $\textbf{Aw}$ and therefore, we cannot expect good regularity properties.

\section{Regularity}\label{sec:regularity}
The computations in Section~\ref{sec:verify} lead directly into this section, but here we state the ideas in generality. Denote $\tilde{p}_1$ to be the largest value of $\left\Vert p \right\Vert$ such that $[0, \tilde{p}_1] \times [-1, 1]^3 \subset \Omega_2$. Denote $\xi^{*}_{1}$ to be the largest value of $x \cdot y$ such that $[\xi^{*}, 1] \times [-1, 1]^3 \subset \Omega_1$ and $\xi^{*}_2$ to be the largest value of $x \cdot y$ such that $[\xi^{*}, 1] \times [-1, 1]^2 \subset \Omega$. Denote $\xi^{*} = \max \left\{\xi^{*}_{1}, \xi^{*}_{2} \right\}$. Then, we write $x \cdot y = \xi \left( \left\Vert p \right\Vert, x \cdot \hat{e}, \hat{e} \cdot \hat{p}, \hat{e} \cdot \hat{q} \right)$, we denote by $\tilde{p}_2$ the largest value of $\left\Vert p \right\Vert$ such that:
\begin{equation}
\xi^{*} \leq \min_{\xi_{2} \in [0, \left\Vert p \right\Vert] \times [-1,1]^3} \xi(\xi_2).
\end{equation}

Define $p^{*} = \min \left\{\tilde{p}_1, \tilde{p}_2 \right\}$. Now, we are ready to define our domain $D$ on which the MTW conditions will hold. Letting $\gamma$ be a constant that satisfies $0<\gamma<p^{*}$ and fix $x$. Define the map $S_{x}: \mathbb{S}^2 \rightarrow \mathbb{S}^2$ via $S_{x}(\hat{p}) = \arccos \left( \xi \left( \gamma, x \cdot \hat{e}, \hat{e} \cdot \hat{p}, \hat{e} \cdot (x \times \hat{p}) \right) \right)$ and the set $V_{x} = S_{x}(\mathbb{S}^2)$. We then define the set $D_{\gamma}$ via:
\begin{equation}\label{eq:dgamma}
D_{\gamma} = \cup_{x} \left\{ x \right\} \times V_{x}(\gamma).
\end{equation}

This choice of $D_{\gamma}$ is done such that, for a fixed $x$, the set $V_{x}$ is $c$-convex with respect to $x$, which is a necessary condition for regularity of solutions of the Optimal Transport PDE. We thus have the following theorem:

\begin{theorem}
Let the cost function with preferential direction satisfy Hypothesis~\ref{hyp:mainhypothesis} and also $\mathbf{As}$ on $D_{\gamma}$. Let $\mu$ and $\nu$ be two $C^{\infty}$ probability measures that are strictly bounded away from zero and $u$ a $c$-convex function such that $T(u)_{\#}\mu = \nu$ and also $\left\vert \nabla u \right\vert \leq \tilde{p}$. Then, $u \in C^{\infty}(\mathbb{S}^2)$.
\end{theorem}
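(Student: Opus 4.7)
The plan is to reduce the statement to the standard MTW regularity theory, as adapted to the sphere by Loeper, using the hypotheses to control everything that could otherwise go wrong (cost-cut locus, degeneracy of the mixed Hessian, and the geometry of the target set). The assumption $\left\vert \nabla u \right\vert \leq \tilde{p}$ together with the construction of $D_{\gamma}$ in Equation~\eqref{eq:dgamma} ensures that the graph $\{(x, T_u(x)) : x \in \mathbb{S}^2\}$ lies inside $D_{\gamma}$, so that the cost function is $C^{4}$ with non-degenerate mixed Hessian (by Hypothesis~\ref{hyp:mainhypothesis} and the results of Section~\ref{sec:mixedHessian}), assumption $\mathbf{A1}$ holds there, and $\mathbf{As}$ holds by assumption. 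The construction of $V_x(\gamma)$ via the map $S_x$ is precisely designed so that, for each $x \in \mathbb{S}^2$, $V_x(\gamma)$ is $c$-convex with respect to $x$; this is the analog of the $c$-convexity of the target domain needed in Caffarelli/MTW-type regularity.

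First I would verify that the optimal map $T = T_u$ is indeed the Optimal Transport map: because $u$ is $c$-convex with $T_u{}_{\#}\mu = \nu$, uniqueness of the optimal plan under $\mathbf{A1}$ (and the absolute continuity of $\mu$) identifies $T_u$ with the Brenier-type minimizer. Next, I would invoke the continuity/interior regularity result of Loeper~\cite{Loeper_OTonSphere} adapted to cost functions satisfying $\mathbf{A0}$--$\mathbf{A2}$ and $\mathbf{As}$: the uniform positivity of the cost-sectional curvature together with the $c$-convexity of $V_x(\gamma)$ and the lower and upper density bounds on $f,g$ yields $C^{1,\alpha}(\mathbb{S}^2)$ regularity of $u$, and consequently Hölder continuity of $T$. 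Crucially, the assumption $\left\vert \nabla u \right\vert \leq \tilde{p}$ prevents mass from ever needing to travel to the cost-cut locus, which is the obstruction that would otherwise prevent these estimates from closing.

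With $C^{1,\alpha}$ regularity in hand, the Optimal Transport PDE~\eqref{eq:OTPDE}, which on the $c$-convex cone reads
\begin{equation}
\det \bigl( D^{2} u(x) + D^{2}_{xx}c(x, T_u(x)) \bigr) = \bigl\vert \det D^{2}_{xy} c(x, T_u(x)) \bigr\vert \, f(x)/g(T_u(x)),
\end{equation}
is uniformly elliptic on the solution $u$ (ellipticity is guaranteed by $c$-convexity together with $\mathbf{A2}$, and non-degeneracy of the right-hand side follows from the strict positivity of $f$ and $g$ and the non-vanishing of the mixed Hessian established in Section~\ref{sec:mixedHessian}). Caffarelli's classical interior $C^{2,\alpha}$ estimates for Monge--Ampère equations, in their generalized-cost version due to Ma--Trudinger--Wang and Liu--Trudinger--Wang, then promote $u$ to $C^{2,\alpha}(\mathbb{S}^2)$. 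Finally, Schauder theory and a standard bootstrap, exploiting $c \in C^{4}$ on $D_{\gamma}$ and $f, g \in C^{\infty}$, yield $u \in C^{k,\alpha}$ for every $k$, hence $u \in C^{\infty}(\mathbb{S}^2)$.

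The main obstacle is the first step, promoting $u$ from $c$-convex and bounded-gradient to $C^{1,\alpha}$: it is precisely here that the stay-away-from-cut-locus condition $\left\vert \nabla u \right\vert \leq \tilde{p}$, the $c$-convexity of $V_x(\gamma)$ built into the definition of $D_{\gamma}$, and the uniform positivity $\mathbf{As}$ of the cost-sectional curvature all must combine to rule out the Loeper-type counterexamples to continuity. Once this is achieved, the rest is a routine (though notationally heavy) elliptic bootstrap that is insensitive to the preferential-direction structure beyond the bounds on $F$ and its derivatives ensured by Hypothesis~\ref{hyp:mainhypothesis}.
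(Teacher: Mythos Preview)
Your proposal is correct and follows essentially the same route as the paper: verify that the gradient bound $|\nabla u|\le\tilde{p}$ confines the graph $\{(x,T_u(x))\}$ to $D_\gamma$, observe that on $D_\gamma$ Hypothesis~\ref{hyp:mainhypothesis} yields $\mathbf{A0}$--$\mathbf{A2}$ while $\mathbf{As}$ is assumed, and then invoke the Loeper/MTW regularity machinery on the sphere. The only cosmetic difference is that the paper phrases the last step as ``\textit{a priori} $C^2$ estimates from~\cite{MTW} plus the method of continuity'' (citing Theorem~19 of~\cite{T_LensRefractor}), whereas you spell out the equivalent $C^{1,\alpha}\to C^{2,\alpha}\to C^\infty$ bootstrap; both are standard pathways to the same conclusion.
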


\begin{proof}
We can utilize Theorem 19 of~\cite{T_LensRefractor} since $\nabla T(x)$ for $p$ satisfying $\left\Vert p(x) \right\Vert <p^{*}$, we can find positive $C^{\infty}$ probability measures $\mu$ and $\nu$ such that the magnitude of the gradient of the potential function is controlled. This allows us to ensure that, for a fixed $x$, the point $y \in V_{x}$. Cost functions satisfying Hypothesis~\ref{hyp:mainhypothesis} satisfy the MTW conditions on $D_{\gamma}$, except for the strictly negative cost sectional curvature. If this is additionally satisfied, then such cost functions satisfy all the MTW conditions on $D_{\gamma}$. As explained in Loeper~\cite{Loeper_OTonSphere}, this allows for the \textit{a priori} bound from~\cite{MTW} to be used and then the method of continuity employed to prove that positive $C^{\infty}$ density functions lead to a $C^{\infty}$ solution of the Optimal Transport PDE.
\end{proof}

Since we cannot expect good regularity properties for the solution of the point-to-point problem, since it does not satisfy condition $\mathbf{Aw}$, we can only expect to solve the Optimal Transport PDE~\eqref{eq:OTPDE} in a weak sense. Furthermore, if the potential function is not $C^1$, then the Optimal Transport mapping $T$ is not single valued, so we must slightly extend the definition of the Monge problem from Section~\ref{sec:introduction}. This extension of the meaning of the Monge problem can be thought of as a relaxation of the Monge problem and is a particular case of the Kantorovich formulation of Optimal Transport, see~\cite{Villani1} or~\cite{santambrogio} for excellent introductions to the subject. The most natural notion of weak solution $u$, for the Optimal Transport PDE, is the notion of a generalized solution. And, the most natural generalization of the Optimal Transport map $T$, will be a set-valued function $T_{u}$. Recall the definition of the Optimal Transport functional $\mathcal{C}(S)$ from Section~\ref{sec:introduction}. This is the primal formulation of the Monge Optimal Transport problem, where we solve for the Optimal Transport mapping $T$ that minimizes $\mathcal{C}(S)$. The dual formulation is to find a pair of functions $(u,v)$ that maximize the following functional:

\begin{equation}
K(\varphi, \psi) = \int_{\mathbb{S}^2} f(x) \varphi(x) dx + \int_{\mathbb{S}^2} g(y) \psi(y) dy,
\end{equation}
subject to the constraint $\varphi(x) + \psi(y) \leq c(x,y), \forall x \in \mathbb{S}^2, y \in \mathbb{S}^2$. It is a classical result, see~\cite{MTW} for an explanation, for example, that the maximizers of $K(\varphi, \psi)$ subject to the constraint are $c$-convex functions $(u,v)$, which then implies that they are semi-convex. Semi-convex functions are continuous up to the boundary. Moreover, their second derivatives exist almost everywhere and, consequently, are Lipschitz continuous on $\mathbb{S}^2$. Then, defining the set-valued function (sometimes referred to as the ``contact set"):
\begin{equation}
T_{\varphi}(x) := \left\{ y \in \mathbb{S}^2: \varphi(x) + \varphi^{c}(y) = -c(x,y) \right\},
\end{equation}
where the superscript ``c" denotes the $c$-transform defined in Section~\ref{sec:introduction}, we may define what we mean by the generalized solution of the Optimal Transport PDE~\eqref{eq:OTPDE}, where $u$ is Lipschitz continuous, and $T_{u}$ is set valued.

\begin{definition}\label{eq:Tdef}
A generalized solution of the Optimal Transport PDE~\eqref{eq:OTPDE} is a $c$-convex function $u$ such that for any Borel set $E \subset \mathbb{S}^2$, we have:
\begin{equation}
\int_{E} f = \int_{T_{u}(E)}g,
\end{equation}
where $T_{u}$ is defined by Equation~\eqref{eq:Tdef}.
\end{definition}

To reiterate, the generalized solution of the Optimal Transport problem is the solution of the Kantorovich Optimal Transport problem. Since we cannot expect smoothness for the point-to-point problem even for $C^{\infty}$ density functions $f$ and $g$, the best we can do is contrive conditions on the source and target distributions $\mu$ and $\nu$ such that they do not require mass to move a distance greater than $\arccos \xi^{*}$. We choose to confine them to a small ball of radius $\arccos (\xi^{*})/2$.

\begin{theorem}
Let $\mu$ and $\nu$ be probability measures with $L^1$ density functions $f$ and $g$, respectively, such that $\int_{\mathbb{S}^2} f = \int_{\mathbb{S}^2}g = 1$ and fixed $x_{0} \in \mathbb{S}^2$ such that $\text{supp}(\mu) \subset B_{x_{0}}(\arccos (\xi^{*})/2)$ and $\text{supp}(\nu) \subset B_{x_{0}}(\arccos (\xi^{*})/2)$. Then, there exists a unique Lipschitz continuous solution (generalized solution), up to a constant, to the Optimal Transport PDE~\eqref{eq:OTPDE}.
\end{theorem}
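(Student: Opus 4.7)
The plan is to invoke the Kantorovich dual formulation and exploit the containment of both supports in a small ball to force the transport to remain in the regime where Hypothesis~\ref{hyp:mainhypothesis} yields enough regularity of the cost function. First, observe that for any $x \in \mathrm{supp}(\mu)$ and $y \in \mathrm{supp}(\nu)$, the triangle inequality on the sphere gives
\begin{equation}
d_{\mathbb{S}^2}(x, y) \leq d_{\mathbb{S}^2}(x, x_0) + d_{\mathbb{S}^2}(x_0, y) \leq \arccos(\xi^{*}),
\end{equation}
so $x \cdot y \geq \xi^{*}$. Hence every candidate pair $(x, y)$ lies in the closed region where $F_1, F_2, F_{11}, F_{12}, F_{13}, F_{22}, F_{23}$ are bounded (Lemma~\ref{lemma:Fbounds}) and where the correspondence between $p$ and $y$ is one-to-one and differentiable by assumptions (1)--(4) of Hypothesis~\ref{hyp:mainhypothesis}.

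Next, I would apply classical Kantorovich duality: since $c$ is continuous on the compact set $\mathrm{supp}(\mu)\times\mathrm{supp}(\nu)$, there exists an optimal plan $\pi$ between $\mu$ and $\nu$, together with a pair $(u, v = u^{c})$ of $c$-conjugate potentials attaining the dual supremum, with $u$ a $c$-convex function. Writing $u(x) = \sup_{y \in \mathrm{supp}(\nu)}\bigl(-c(x, y) - v(y)\bigr)$ and using the bounds on $F_1, F_2$ together with the decomposition $\nabla_x c(x,y) = F_1 \,\nabla_x(x\cdot y) + F_2 \,\nabla_x(x\cdot \hat{e})$, the family $\{-c(\cdot, y) - v(y)\}_{y \in \mathrm{supp}(\nu)}$ is uniformly Lipschitz on $\mathbb{S}^2$. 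Taking the supremum preserves this Lipschitz bound, so $u$ extends to a globally Lipschitz continuous, $c$-convex function on $\mathbb{S}^2$ with an explicit constant depending only on the bounds of $F_1, F_2$ on $\Omega$.

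To verify that $u$ is a generalized solution in the sense of Definition~\ref{eq:Tdef}, I would use the standard fact that the optimal plan $\pi$ is concentrated on the contact set $\{(x, y) : u(x) + v(y) = -c(x,y)\}$, which is precisely the graph of the set-valued map $T_{u}$. The marginal conditions on $\pi$ then translate directly into $\int_{E} f = \int_{T_{u}(E)} g$ for every Borel $E \subset \mathbb{S}^2$.

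Finally, uniqueness up to a constant proceeds as follows. Because $x \cdot y \geq \xi^{*}$ throughout the relevant domain, assumption (2) of Hypothesis~\ref{hyp:mainhypothesis} gives the twist condition $y \mapsto -\nabla_x c(x, y)$ is injective. Hence at every $x$ where $u$ is differentiable (i.e. $\mu$-a.e., by the Lipschitz property), the contact set $T_{u}(x)$ is a singleton, determined by $\nabla u(x) = -\nabla_x c(x, y)$. Given two $c$-convex generalized solutions $u_1, u_2$, both induce optimal couplings concentrated on single-valued graphs; the Brenier--McCann type argument (see~\cite{Villani1}) shows the optimal plan is unique, so $\nabla u_1 = \nabla u_2$ $\mu$-a.e., and connectedness of the ball $B_{x_0}(\arccos(\xi^{*})/2)$ forces $u_1 - u_2$ to be constant on $\mathrm{supp}(\mu)$; the sup formula then propagates this constant to all of $\mathbb{S}^2$. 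The principal obstacle is this last step: without strict $c$-convexity or continuity of $\nabla u$, one has to justify carefully that an $\mathcal{L}^2$-a.e.\ equality of gradients on a connected open support propagates to equality of the potentials up to a constant globally, which requires invoking the Lipschitz regularity of $u$ together with absolute continuity of $\mu$ on $B_{x_0}(\arccos(\xi^{*})/2)$.
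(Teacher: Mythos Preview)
Your proposal is correct and follows essentially the same approach as the paper: both arguments use the support condition together with the triangle inequality to force $(x,y)$ into the region where $\mathbf{A1}$ and $\mathbf{A2}$ hold, then invoke Kantorovich duality to obtain $c$-convex maximizers which are Lipschitz and are generalized solutions. Your version is considerably more detailed---you make the Lipschitz bound explicit via the supremum of a uniformly Lipschitz family and you spell out the uniqueness argument through the twist condition---whereas the paper's proof is a brief sketch that simply cites~\cite{MTW} for both the generalized-solution interpretation and (implicitly) uniqueness.
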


\begin{proof}
Since $\mu$ and $\nu$ are contained in the ball $B_{x_{0}}(\arccos(\xi^{*})/2)$, then we apply the MTW conditions $\mathbf{A1}$ and $\mathbf{A2}$ on $D_{\gamma}$ to conclude that there exist maximizers of the Kantorovich dual formulation of the Optimal Transport problem. Such maximizers are necessarily $c$-convex functions, which implies that they are Lipschitz continuous. By~\cite{MTW}, such maximizers are generalized solutions of the Optimal Transport PDE~\eqref{eq:OTPDE}.
\end{proof}

\section{Conclusion}\label{sec:conclusion}
We explored the theory of cost functions with preferential direction, with an aim to answer questions about the existence, uniqueness, and regularity of solutions of the Optimal Transport PDE for the point-to-point cost function. We defined sufficient hypotheses on the cost function that guaranteed most of the MTW conditions ($\mathbf{A0}$, $\mathbf{A1}$ and $\mathbf{A2}$) on a domain $D$. This thereby guaranteed the existence of unique (up to a constant) Lipschitz continuous generalized solutions of the Optimal Transport PDE, provided that the source and target density functions could control how far the mapping moved mass. We also derived the negative cost-sectional curvature condition for cost functions with preferential direction, the final MTW condition $\mathbf{Aw}$ and $\mathbf{As}$. Using these formulas and hypotheses, we were able to get regularity guarantees for a wide class of cost functions with preferential direction, provided that mass was not required to move too far. We could also show that provided that the source and target density functions had support in a certain set, the Optimal Transport PDE with the point-to-point cost function had a unique Lipschitz continuous solution up to a constant.

\textbf{Acknowledgements}: I would like to thank Brittany Hamfeldt for introducing me to these optics problems and working alongside for some of the initial exploratory computations.

\bibliographystyle{plain}
\bibliography{ThreeSystems}

\end{document}